\newtheorem{theorem}{Theorem}[section]
\newtheorem{corollary}{Corollary}[section]
\newtheorem{lemma}{Lemma}[section]
\newtheorem{proposition}{Proposition}[section]
\begin{document}

\title{The representations of two-parameter Kashiwara algebras }
\author{Weideng Cui }
\date{}
\maketitle \abstract{In this paper, we give a unified construction of two-parameter quantum groups $U_{r,s}(\mathfrak{g})$, two-parameter Kashiwara algebras $B_{r,s}(\mathfrak{g})$, two-parameter quantized Weyl algebras $W_{r,s}(\mathfrak{g})$ and the action of $U_{r,s}(\mathfrak{g})$ on $B_{r,s}(\mathfrak{g})$ and $W_{r,s}(\mathfrak{g})$. Applying their properties to the category $\mathcal{O}(B_{r,s}(\mathfrak{g}))$, whose objects are ``upper bounded" $B_{r,s}(\mathfrak{g})$-modules, We can give a conceptual proof of its semi-simplicity and the classification of simple modules in it.}\\

\thanks{{Keywords}: Two-parameter quantum groups; Two-parameter Kashiwara algebras; the skew Hopf pairing; Extremal projectors} \large

\medskip
\section{Introduction}

The notion of quantum groups was introduced by Drinfeld and Jimbo, independently, around 1985 in their study of the quantum Yang-Baxter equations and solvable lattice models. Quantum groups $U_{q}(\mathfrak{g}),$ depending on a single parameter $q$, are certain families of Hopf algebras that are deformations of universal enveloping algebras of symmetrizable Kac-Moody algebras $\mathfrak{g}$.

In [K], Kashiwara introduced the so-called Kashiwara algebra $B_{q}^{\vee}(\mathfrak{g})$ (he called it the reduced $q$-analogue) and gave the projector $P$ for the Kashiwara algebra of $\mathfrak{s}\mathfrak{l}_{2}$-case in order to define the crystal basis of $U_{q}^{-}(\mathfrak{g}).$ Moreover, let $\mathcal{O}(B_{q}^{\vee}(\mathfrak{g}))$ be the category of $B_{q}^{\vee}(\mathfrak{g})$-modules satisfying a finiteness condition, then he affirmed without proof that $\mathcal{O}(B_{q}^{\vee}(\mathfrak{g}))$ is semi-simple and $U_{q}^{-}(\mathfrak{g})$ is a unique isomorphic classes of simple objects of $\mathcal{O}(B_{q}^{\vee}(\mathfrak{g}))$.

In [N1], Nakashima studied the so-called $q$-Boson Kashiwara algebra $B_{q}(\mathfrak{g})$ and found therein an interesting object $\Gamma$. In [N2], he re-defined the extremal projector $\Gamma$ for $B_{q}(\mathfrak{g})$, clarified its properties and applied it to the representation theory of $q$-Boson Kashiwara algebras. By using the properties of $\Gamma,$ he showed that the category $\mathcal{O}(B_{q}(\mathfrak{g}))$, whose objects are ``upper bounded" $B_{q}(\mathfrak{g})$-modules, is semi-simple and classified its simple modules.

In [F], Fang has given a unified construction of quantum groups, $q$-Boson Kashiwara algebras and quantized Weyl algebras and an action of quantum groups on quantized Weyl algebras. This enables him to give a conceptual proof of the main results in [N2].

From down-up algebras approach, Berkart and Witherspoon recovered Takeuchi's definition of two-parameter quantum groups of type A and investigated their structures and finite dimensional representation theory in [BW1, BW2, BW3, BW4]. Since then, a systematic study of the two-parameter quantum groups of other types has been going on. For instance, Bergeron, Gao and Hu developed the corresponding theory of two-parameter quantum groups for type B, C, D in [BGH1, BGH2]. Later on, Hu et al. continued this project (see [HS, BH] for exceptional types $G_2, E_6, E_7, E_8$). In [HP1, HP2], Hu and Pei give a simpler and unified definition for a class of two-parameter quantum groups $U_{r,s}(\mathfrak{g})$ associated to a finite-dimensional semi-simple Lie algebra $\mathfrak{g}$ in terms of the Euler form.

In [C], we have generalized Nakashima's results to the two-parameter case. Specifically, we define a two-parameter Kashiwara algebra $B_{r,s}(\mathfrak{g})$ and its extremal projector $\Gamma$, and study their basic properties. Applying their properties to the category $\mathcal{O}(B_{r,s}(\mathfrak{g}))$, whose objects are ``upper bounded" $B_{r,s}(\mathfrak{g})$-modules, we can give a proof of its semi-simplicity and the classification of simple modules in it.

In this paper, we will generalize Fang's results to the two-parameter case. Specifically, we give a unified construction of two-parameter quantum groups $U_{r,s}(\mathfrak{g})$, two-parameter Kashiwara algebras $B_{r,s}(\mathfrak{g})$, two-parameter quantized Weyl algebras $W_{r,s}(\mathfrak{g})$ and the action of $U_{r,s}(\mathfrak{g})$ on $B_{r,s}(\mathfrak{g})$ and $W_{r,s}(\mathfrak{g})$, which enables us to give a conceptual proof of the main results in [C].

The organization of this article is as follows. In Sect.~2, we review some notions in Hopf algebras and give an action of quantum doubles on Heisenberg doubles with the help of Schr\"{o}dinger representations following [F]. In Sect.~3, we construct a two-parameter quantum group $U_{r,s}(\mathfrak{g})$, a two-parameter Kashiwara algebra $B_{r,s}(\mathfrak{g})$ and a two-parameter quantized Weyl algebra $W_{r,s}(\mathfrak{g})$ concretely and calculate the action between them in the $\mathfrak{s}\mathfrak{l}_{2}$-case. In Sect.~4, we construct two-parameter quantum Weyl algebras from the braiding in the $U_{r,s}(\mathfrak{g})$-Yetter-Drinfel'd category and prove the main theorem for the structure of $\mathcal{O}(B_{r,s}(\mathfrak{g}))$. At last, we compare our projection with those defined in [C] in the $\mathfrak{s}\mathfrak{l}_{2}$-case.

\section{Skew Hopf pairings and Double constructions}

We must emphasize that the whole section all follows from [F, Section 2]. Suppose that we are working in the complex field $\mathbb{C}$ and all tensor products are over $\mathbb{C}$.

\subsection{Yetter-Drinfel'd modules}

Let $H$ be a Hopf algebra. A vector space $V$ is called a (left) $H$-Yetter-Drinfel'd module if it is simultaneously an $H$-module and an $H$-comodule which satisfy the Yetter-Drinfel'd compatibility condition: for any $h\in H$ and $v\in V,$\[\sum h_{(1)}v_{(-1)}\otimes h_{(2)}\cdot v_{(0)}=\sum (h_{(1)}\cdot v)_{(-1)}h_{(2)}\otimes (h_{(1)}\cdot v)_{(0)},\]
where $\Delta(h)=\sum h_{(1)}\otimes h_{(2)}$ and $\rho(v)=\sum v_{(-1)}\otimes v_{(0)}$ are sweedler notations for coproduct and comodule structure maps.

Morphisms between two $H$-Yetter-Drinfel'd modules are linear maps preserving $H$-module and $H$-comodule structures. We denote the category of $H$-Yetter-Drinfel'd modules by ${}_{H}^{H}\mathcal{YD},$ which is a tensor category. The advantage of Yetter-Drinfel'd module is: for $V, W\in {}_{H}^{H}\mathcal{YD},$ there is a braiding $\sigma : V\otimes W\rightarrow W\otimes V,$ given by $\sigma(v\otimes w)=\sum v_{(-1)}\cdot w\otimes v_{(0)}.$ If both $V$ and $W$ are $H$-module algebras, $V\otimes W$ will have an algebra structure if we use $\sigma$ instead of the usual flip, and we denote it by $V\underline{\otimes}W$.

\subsection{Braided Hopf algebras in ${}_{H}^{H}\mathcal{YD}$}

$\mathbf{Definition ~2.1.}$ (see [AS, Section 1.3]). A braided Hopf algebra in the category ${}_{H}^{H}\mathcal{YD}$ is a collection ($A, m, \eta, \Delta, \varepsilon, S$) such that\vskip2mm
(1) ($A, m, \eta$) is an algebra in ${}_{H}^{H}\mathcal{YD};$ ($A, \Delta, \varepsilon$) is a coalgebra in ${}_{H}^{H}\mathcal{YD}$. That is to say, $m, \eta, \Delta, \varepsilon$ are morphisms in ${}_{H}^{H}\mathcal{YD}.$

(2) $\Delta : A\rightarrow A\underline{\otimes}A$ ia a morphism of algebras.

(3) $\varepsilon : A\rightarrow \mathbb{C}$ and $\eta : \mathbb{C}\rightarrow A$ are algebra morphisms.

(4) $S$ is the convolution inverse of Id$_{A}\in$ End($A$).

\subsection{Braided Hopf modules}

Let $B$ be a braided Hopf algebra in some Yetter-Drinfel'd module category. For a left-braided $B$-Hopf module $M,$ we mean a left $B$-module and a left $B$-comodule satisfying the following compatibility condition:\[\rho\circ l=(m\otimes l)\circ(\mathrm{id}\otimes \sigma \otimes \mathrm{id})\circ(\Delta\otimes \rho): B\otimes M\rightarrow B\otimes M,\]
where $m$ is the multiplication in $B,$ $l :B\otimes M\rightarrow M$ is the module structure map, $\rho : M\rightarrow B\otimes M$ is the comodule structure map and $\sigma$ is the braiding in the fixed Yetter-Drinfel'd module category.

$\mathbf{Example ~2.1.}$ Let $V$ be a vector space over $\mathbb{C}.$ Then $B\otimes V$ admits a trivial $B$-braided Hopf module structure given by: for $b, b'\in B$ and $v\in V,$\[b'\cdot (b\otimes v)=b'b\otimes v,~~~\rho(b\otimes v)=\sum b_{(1)}\otimes b_{(2)}\otimes v\in B\otimes (B\otimes V).\]

We let ${}_{B}^{B}\mathcal{M}$ denote the category of left-braided $B$-Hopf modules. The following proposition gives the triviality of such modules.
\begin{proposition}{\rm (see [F, Proposition~2.1])}
Let $M\in {}_{B}^{B}\mathcal{M}$ be a braided Hopf module, $\rho : M\rightarrow B\otimes M$ be the structural map, $M^{co\rho}=\{m\in M|~\rho(m)=1\otimes m\}$ be the set of coinvariants. Then there exists an isomorphism of braided $B$-Hopf modules:\[M\cong B\otimes M^{co\rho},\]
where the right-hand side adopts the trivial Hopf module structure. Moreover, maps in two directions are given by:\[M\rightarrow B\otimes M^{co\rho},~~~m\mapsto \sum m_{(-1)}\otimes P(m_{(0)}),\]\[B\otimes M^{co\rho}\rightarrow M,~~~b\otimes m\mapsto bm,\]
where $m\in M,$ $b\in B$ and $P : M\rightarrow M^{co\rho}$ is defined by $P(m)=\sum S(m_{(-1)})m_{(0)}.$
\end{proposition}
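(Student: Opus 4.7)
The plan is to adapt the classical fundamental theorem of Hopf modules (Sweedler's theorem) to the braided Yetter--Drinfel'd setting, treating $P$ as a ``projection'' onto coinvariants and using the antipode axiom in its standard form. First I would verify that $P(m)=\sum S(m_{(-1)})m_{(0)}$ actually lands in $M^{co\rho}$. For this, I apply $\rho$ to $P(m)$ and use the braided Hopf module compatibility $\rho\circ l=(m\otimes l)\circ(\mathrm{id}\otimes\sigma\otimes\mathrm{id})\circ(\Delta\otimes\rho)$ to push $\rho$ past the action. After invoking coassociativity of $\rho$, the braiding rule $\sigma(v\otimes w)=\sum v_{(-1)}\cdot w\otimes v_{(0)}$, and the antipode identity $\sum S(b_{(1)})b_{(2)}=\varepsilon(b)1$, the expression should collapse to $1\otimes P(m)$.

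Second, I would check that the two maps $\phi:m\mapsto\sum m_{(-1)}\otimes P(m_{(0)})$ and $\psi:b\otimes m\mapsto b\cdot m$ are mutually inverse. For $\psi\circ\phi$, coassociativity of $\rho$ rewrites $\sum m_{(-1)}\cdot S(m_{(0)(-1)})m_{(0)(0)}$ as $\sum m_{(-1)(1)}S(m_{(-1)(2)})\cdot m_{(0)}$, which reduces to $m$ by the antipode axiom and counit. For $\phi\circ\psi$ applied to $b\otimes m$ with $m\in M^{co\rho}$, I would expand $\rho(bm)$ via the compatibility condition; since $\rho(m)=1\otimes m$, the braiding terms degenerate and, after applying $P$ and using $P(m)=S(1)m=m$, the expression collapses to $b\otimes m$.

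Third, I would verify that $\phi$ and $\psi$ are morphisms of braided $B$-Hopf modules, i.e.\ that they intertwine both the $B$-action and the $B$-coaction on $M$ with the trivial ones on $B\otimes M^{co\rho}$. For $\psi$ this reduces to associativity of the action together with the compatibility condition; for $\phi$ it then follows automatically from being the two-sided inverse of $\psi$.

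The main obstacle I expect is bookkeeping with the braiding. Unlike the classical case where the flip is symmetric and disappears from the calculation, here $\sigma$ intertwines $B$-module and $B$-comodule data by acting via the ambient Yetter--Drinfel'd structure on $H$, so each application of the compatibility condition injects $H$-actions and $H$-coactions that must be tracked and eventually discharged by the Yetter--Drinfel'd compatibility. Once the Sweedler indices for $H$, $B$, and $M$ are kept cleanly separated, the remaining computations are formally identical to the ordinary Hopf module argument and should go through without new ideas.
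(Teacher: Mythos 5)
Your outline is correct and is exactly the standard proof of the fundamental theorem for (braided) Hopf modules; the paper itself gives no proof of this proposition, simply citing [F, Proposition~2.1], and the cited argument follows the same three steps you describe (coinvariance of $P$, mutual inversibility via coassociativity and the antipode axiom, and compatibility with the (co)module structures). One small imprecision: in checking $\phi\circ\psi=\mathrm{id}$ the quantity to evaluate is $P(b_{(2)}m)=\varepsilon(b_{(2)})m$, obtained from $\rho(bm)=\sum b_{(1)}\otimes b_{(2)}m$ for coinvariant $m$ (where the braiding is discharged using $h\cdot 1_B=\varepsilon(h)1_B$ and the counit axiom for the $H$-coaction), rather than $P(m)=m$ alone --- but this does not affect the validity of your argument.
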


\subsection{Skew Hopf pairings}

Let $A$ and $B$ be two Hopf algebras with invertible antipodes. A skew Hopf pairing between $A$ and $B$ is a bilinear form $\varphi :A\times B\rightarrow \mathbb{C}$ such that\vskip2mm

(1) for any $a\in A,$ $b, b'\in B,$ $\varphi(a, bb')=\sum \varphi(a_{(1)}, b)\varphi(a_{(2)}, b');$

(2) for any $a, a'\in A,$ $b\in B,$ $\varphi(aa', b)=\sum \varphi(a', b_{(1)})\varphi(a, b_{(2)});$

(3) for any $a\in A,$ $b\in B,$ $\varphi(a,1)=\varepsilon(a),$ $\varphi(1, b)=\varepsilon(b).$

\subsection{Quantum Doubles}

Let $A$ and $B$ be two Hopf algebras with invertible antipodes and $\varphi$ be a skew Hopf pairing between them. The quantum double $D_{\varphi}(A, B)$ is defined as follows:\vskip2mm

(1) As a vector space, it is $A\otimes B.$

(2) As a coalgebra, it is the tensor product of coalgebras $A$ and $B.$

(3) As an algebra, the multiplication is given by:\[(a\otimes b)(a'\otimes b')=\sum \varphi(S^{-1}(a'_{(1)}), b_{(1)})\varphi(a'_{(3)}, b_{(3)})aa'_{(2)}\otimes b_{(2)}b'.\]

\subsection{Schr\"{o}dinger representations}

The Schr\"{o}dinger representation of $D_{\varphi}(A, B)$ on $A$ is given by: for $a, x\in A,$ $b\in B,$\vskip2mm$\hspace{4cm}(a\otimes 1)\cdot x=\sum a_{(1)}x S(a_{(2)}),$\vskip2mm$\hspace{4cm}(1\otimes b)\cdot x=\sum \varphi(x_{(1)}, S(b))x_{(2)}.$

The Schr\"{o}dinger representation of $D_{\varphi}(A, B)$ on $B$ is given by: for $a\in A,$ $b, y\in B,$\vskip2mm$\hspace{4cm}(a\otimes 1)\cdot y=\sum \varphi(a, y_{(1)})y_{(2)},$\vskip2mm$\hspace{4cm}(1\otimes b)\cdot y=\sum b_{(1)}y S(b_{(2)}).$

\begin{proposition}{\rm (see [M, Example~7.1.8])}
With the definition above, both $A$ and $B$ are $D_{\varphi}(A, B)$-module algebras.
\end{proposition}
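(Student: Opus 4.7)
The plan is to unpack the two statements hidden in the proposition, namely that the formulas displayed before the proposition define a left $D_\varphi(A,B)$-module structure on $A$ (resp.\ on $B$), and that this module structure is compatible with the algebra structure of $A$ (resp.\ $B$). By the symmetry of the construction it suffices to treat the case of $A$; the argument for $B$ is strictly parallel, with the roles of the two pairing axioms exchanged.

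First I would verify that each of the two ``partial'' actions is really an action. The formula $(a\otimes 1)\cdot x=\sum a_{(1)}xS(a_{(2)})$ is the (left) adjoint action of $A$ on itself, and a short calculation with coassociativity and the antipode axiom shows that $(aa'\otimes 1)\cdot x=(a\otimes 1)\cdot((a'\otimes 1)\cdot x)$ and that this action makes $A$ into an $A$-module algebra. For the other piece, I would compute
\[
(1\otimes b)\cdot\bigl((1\otimes b')\cdot x\bigr)=\sum \varphi(x_{(1)},S(b'))\varphi(x_{(2)},S(b))x_{(3)}
\]
and then use axiom~(1) of the skew Hopf pairing together with $\Delta(S(b))=\sum S(b_{(2)})\otimes S(b_{(1)})$ (which holds as $S$ is an anti-coalgebra map) to collapse this to $\sum\varphi(x_{(1)},S(bb'))x_{(2)}=(1\otimes bb')\cdot x$.

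The main obstacle, and the heart of the proof, is the cross-compatibility: I must show that the two partial actions glue into an honest action of $D_\varphi(A,B)$. This amounts to verifying
\[
\bigl((a\otimes 1)(1\otimes b)\bigr)\cdot x=(a\otimes 1)\cdot\bigl((1\otimes b)\cdot x\bigr)\quad\text{and}\quad\bigl((1\otimes b)(a\otimes 1)\bigr)\cdot x=(1\otimes b)\cdot\bigl((a\otimes 1)\cdot x\bigr).
\]
The first is routine; the second requires expanding the multiplication rule of $D_\varphi(A,B)$ from Section~2.5 to rewrite $(1\otimes b)(a\otimes 1)=\sum\varphi(S^{-1}(a_{(1)}),b_{(1)})\varphi(a_{(3)},b_{(3)})\,a_{(2)}\otimes b_{(2)}$, applying this to $x$, and then independently expanding $(1\otimes b)\cdot(\sum a_{(1)}xS(a_{(2)}))$ using the $B$-action formula. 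To see that the two resulting expressions agree, I would apply axioms~(1) and~(2) of the skew Hopf pairing to $\Delta^{(2)}(a_{(1)}xS(a_{(2)}))$, so that each tensor factor of $\Delta(b)$ picks up the correct piece of $\Delta^{(2)}(a)$ and of $\Delta(x)$, and then use $S\circ S^{-1}=\mathrm{id}$ together with the antipode axiom $\sum a_{(1)}S(a_{(2)})=\varepsilon(a)\,1$ to cancel the extra terms involving $a$. This is essentially a bookkeeping exercise, but the introduction of $S^{-1}$ in the double multiplication is precisely what is needed for the cancellation to work.

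Finally, having the $D_\varphi(A,B)$-module structure in hand, I would check the module-algebra axiom $d\cdot(xy)=\sum(d_{(1)}\cdot x)(d_{(2)}\cdot y)$ and $d\cdot 1=\varepsilon(d)1$ on the two generating subalgebras. For $d=a\otimes 1$ this is the standard fact that the adjoint action makes any Hopf algebra a module algebra over itself, and follows from $\Delta(S(a))=\sum S(a_{(2)})\otimes S(a_{(1)})$ together with coassociativity. For $d=1\otimes b$, expanding $(1\otimes b)\cdot(xy)$ via axiom~(1) of the skew Hopf pairing gives $\sum\varphi(x_{(1)}y_{(1)},S(b))x_{(2)}y_{(2)}=\sum\varphi(x_{(1)},S(b_{(2)}))\varphi(y_{(1)},S(b_{(1)}))x_{(2)}y_{(2)}$, which is exactly $\sum((1\otimes b_{(1)})\cdot x)((1\otimes b_{(2)})\cdot y)$ after using the anti-comultiplicativity of $S$. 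Putting the two pieces together yields the module-algebra axiom for arbitrary $d\in D_\varphi(A,B)$ and completes the proof.
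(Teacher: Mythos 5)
The paper offers no proof of this proposition---it is quoted directly from [M, Example~7.1.8]---and your direct verification (check that each partial action is an action, check compatibility with the cross relation in $D_{\varphi}(A,B)$, check the module-algebra axiom separately on $A\otimes 1$ and $1\otimes B$) is exactly the standard argument given there, and it is correct. One small correction: to collapse $(1\otimes b)\cdot((1\otimes b')\cdot x)$ to $(1\otimes bb')\cdot x$ you need the anti-multiplicativity $S(bb')=S(b')S(b)$ fed into axiom~(1) of the pairing, not the anti-comultiplicativity of $S$ that you invoke at that step; the anti-comultiplicativity (together with the identity $\varphi(S(a),S(b))=\varphi(a,b)$, which you should state explicitly) is what you need in the cross-relation and module-algebra checks.
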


\subsection{Heisenberg doubles}
The Heisenberg double $H_{\varphi}(A, B)$ is an algebra defined as follows:\vskip2mm

(1) As a vector space, it is $B\otimes A$ and we denote the pure tensor by $b\sharp a.$

(2) The product is given by: for $a, a'\in A,$ $b, b'\in B,$\[(b\sharp a)(b'\sharp a')=\sum \varphi(a_{(1)}, b'_{(1)})bb'_{(2)}\sharp a_{(2)}a'.\]

\subsection{Quantum double action on the Heisenberg double}

We define an action of $D_{\varphi}(A, B)$ on $H_{\varphi}(A, B)$ as follows: for $a, a'\in A,$ $b, b'\in B,$ \[(a\otimes b)\cdot (b'\sharp a')=\sum (a_{(1)}\otimes b_{(1)})\cdot b'\sharp (a_{(2)}\otimes b_{(2)})\cdot a',\]
this is a diagonal type action. Moreover, we have the following result.
\begin{proposition}{\rm (see [F, Proposition~2.3])}
With the action as above, $H_{\varphi}(A, B)$ is a $D_{\varphi}(A, B)$-module algebra.
\end{proposition}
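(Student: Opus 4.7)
The plan is to verify two things in turn: first, that the stated formula defines a genuine $D_\varphi(A,B)$-module structure on the vector space $B\otimes A$, and second, that this action satisfies the module-algebra compatibility $d\cdot(uv)=\sum(d_{(1)}\cdot u)(d_{(2)}\cdot v)$ for all $d\in D_\varphi(A,B)$ and $u,v\in H_\varphi(A,B)$.

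The first point is essentially automatic from the construction of the quantum double. The coalgebra structure on $D_\varphi(A,B)$ is by definition the tensor product of the coalgebras $A$ and $B$, so $\Delta(a\otimes b)=\sum(a_{(1)}\otimes b_{(1)})\otimes(a_{(2)}\otimes b_{(2)})$. Proposition 2.2 equips $A$ and $B$ with $D_\varphi(A,B)$-module structures via the Schr\"{o}dinger representations. The defining formula is therefore nothing but the canonical diagonal action of $D_\varphi(A,B)$ on the tensor product of two of its modules, which is always a module.

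For the module-algebra compatibility, it suffices to check the identity on a generating family of $D_\varphi(A,B)$, since if it holds for $d$ and $d'$ it propagates to $dd'$ by coassociativity of $\Delta_{D_\varphi(A,B)}$. I would use the two families $a\otimes 1$ and $1\otimes b$. For each such generator, I would expand both sides by combining: the explicit Schr\"{o}dinger formulas of Section 2.6, the defining formula $(b'\sharp a')(b''\sharp a'')=\sum\varphi(a'_{(1)},b''_{(1)})b'b''_{(2)}\sharp a'_{(2)}a''$ for the Heisenberg product, and the fact from Proposition 2.2 that $A$ and $B$ are themselves $D_\varphi(A,B)$-module algebras, which distributes the Schr\"{o}dinger action over each product in the individual tensor slot.

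The main obstacle is purely Sweedler-index bookkeeping: after expansion, both sides become sums involving coproducts of $a,b,a',b',a'',b''$ paired through $\varphi$. The identity then reduces to multiplicativity of $\varphi$ in each slot (the two pairing axioms) together with coassociativity in $A$ and $B$. The delicate point is that the extra $\varphi$-factor produced by the Heisenberg product on the left hand side must match the $\varphi$-factor that appears on the right hand side when the Schr\"{o}dinger formula is applied separately to the two tensor slots and the results are then multiplied in $H_\varphi(A,B)$; it is precisely the skew (rather than straight) version of the pairing that makes this bookkeeping balance. I would treat $d=1\otimes b$ first and then $d=a\otimes 1$, as the two computations are structurally parallel after swapping the roles of $A$ and $B$.
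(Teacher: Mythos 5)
Your strategy is correct and is essentially the argument behind the cited source [F, Proposition~2.3] (this paper itself offers no proof beyond the citation): one checks the module-algebra identity on the algebra generators $a\otimes 1$ and $1\otimes b$ of $D_{\varphi}(A,B)$, expands both sides via the Heisenberg product formula and the Schr\"{o}dinger formulas, and reduces to the two multiplicativity axioms of the skew pairing together with coassociativity, the module structure itself being the automatic diagonal one on the tensor product of the two Schr\"{o}dinger modules. The only minor points are that propagating the identity from generators $d,d'$ to $dd'$ uses that $\Delta$ is an algebra morphism (combined with the module axiom), not coassociativity alone, and that the unit condition $d\cdot(1\sharp 1)=\varepsilon(d)\,1\sharp 1$ should also be recorded, though it is immediate from $A$ and $B$ being $D_{\varphi}(A,B)$-module algebras.
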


We define a $D_{\varphi}(A, B)$-comodule structure on both $A$ and $B$ as follows:\[A\rightarrow D_{\varphi}(A, B)\otimes A,~~~~a\mapsto \sum a_{(1)}\otimes 1\otimes a_{(2)},\]\[B\rightarrow D_{\varphi}(A, B)\otimes B,~~~~b\mapsto \sum 1\otimes b_{(1)}\otimes b_{(2)}.\]

\begin{proposition}{\rm (see [F, Proposition~2.4])}
With Schr\"{o}dinger representations and comodule structure maps defined as above, both $A$ and $B$ are in the category ${}_{D_{\varphi}}^{D_{\varphi}}\mathcal{YD}.$
\end{proposition}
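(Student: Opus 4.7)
My plan is to verify the two Yetter-Drinfel'd ingredients—the coaction being well-defined and the compatibility identity—separately for $A$ and $B$, and to reduce the compatibility identity to two subcases using the vector space decomposition $D_\varphi(A,B) = (A \otimes 1)(1 \otimes B)$.

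First, I observe that the maps $A \hookrightarrow D_\varphi(A,B)$, $a \mapsto a \otimes 1$, and $B \hookrightarrow D_\varphi(A,B)$, $b \mapsto 1 \otimes b$, are Hopf subalgebra embeddings: they are coalgebra maps because the coalgebra structure on $D_\varphi$ is the tensor coproduct, and they are algebra maps because in the multiplication formula the pairing factors collapse via $\varphi(\cdot,1)=\varepsilon$ and $\varphi(1,\cdot)=\varepsilon$ when one argument is $1$. Hence the proposed structure maps $a \mapsto \sum a_{(1)} \otimes 1 \otimes a_{(2)}$ and $b \mapsto \sum 1 \otimes b_{(1)} \otimes b_{(2)}$ are the regular coactions of $A$ and $B$ pushed forward along these Hopf embeddings, and they automatically satisfy coassociativity and the counit axiom.

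Next, the Yetter-Drinfel'd identity is linear in $h$, so it suffices to check it separately for $h = a \otimes 1$ and $h = 1 \otimes b$. For $V = A$ with $h = a \otimes 1$, the Schr\"{o}dinger action reduces to the adjoint $a \cdot x = \sum a_{(1)} x S(a_{(2)})$ and the coaction reduces to $\Delta_A$, so the identity is the classical statement that a Hopf algebra is a Yetter-Drinfel'd module over itself under adjoint action and regular coaction. The case $V = B$ with $h = 1 \otimes b$ is symmetric.

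The substantive computation is the cross case $V = A$ with $h = 1 \otimes b$ (and symmetrically $V = B$ with $h = a \otimes 1$). Expanding the left-hand side via the multiplication rule of $D_\varphi$ on $(1 \otimes b_{(1)})(x_{(1)} \otimes 1)$—which itself introduces two pairing factors—together with the Schr\"{o}dinger formula $(1 \otimes b_{(2)}) \cdot x_{(2)} = \sum \varphi(x_{(2)(1)}, S(b_{(2)})) x_{(2)(2)}$ yields, after splitting $x$ into five and $b$ into four Sweedler components,
\[\sum \varphi(S^{-1}(x_{(1)}), b_{(1)})\, \varphi(x_{(3)}, b_{(3)})\, \varphi(x_{(4)}, S(b_{(4)}))\, (x_{(2)} \otimes b_{(2)}) \otimes x_{(5)}.\]
Axiom~(1) of the skew Hopf pairing collapses $\sum \varphi(x_{(3)}, b_{(3)}) \varphi(x_{(4)}, S(b_{(4)})) = \sum \varphi(x_{(3)}, b_{(3)} S(b_{(4)})) = \varepsilon(x_{(3)}) \varepsilon(b_{(3)})$ after coassociative regrouping, leaving $\sum \varphi(S^{-1}(x_{(1)}), b_{(1)}) (x_{(2)} \otimes b_{(2)}) \otimes x_{(3)}$. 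The standard identity $\varphi(S^{-1}(a),b) = \varphi(a, S(b))$—which holds for any skew Hopf pairing because both sides are the convolution inverse of $\varphi$ in $\mathrm{Hom}(A \otimes B, \mathbb{C})$—then matches this with the right-hand side $\sum \varphi(x_{(1)}, S(b_{(1)})) x_{(2)} \otimes b_{(2)} \otimes x_{(3)}$.

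The main obstacle is the Sweedler book-keeping in this cross case: because the multiplication rule of $D_\varphi$ itself injects two extra pairing factors, one must keep track of five Sweedler components of $x$ and four of $b$ and telescope them carefully. No structural input beyond the skew Hopf pairing axioms, coassociativity, and the antipode identity is required.
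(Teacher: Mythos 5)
The paper offers no proof of this proposition; it is quoted verbatim from [F, Proposition~2.4], so your argument has to stand on its own. On the whole it does: the observation that $A\otimes 1$ and $1\otimes B$ are Hopf subalgebras of $D_{\varphi}(A,B)$ (so the proposed coactions are pushforwards of the regular coactions and are automatically coassociative and counital), the identification of the diagonal cases with the classical adjoint-action/regular-coaction Yetter--Drinfel'd structure of a Hopf algebra over itself, and the Sweedler computation in the cross case --- including the collapse $\sum\varphi(x_{(3)},b_{(3)})\varphi(x_{(4)},S(b_{(4)}))=\varepsilon(\cdot)\varepsilon(\cdot)$ via axiom (1) and the identity $\varphi(S^{-1}(a),b)=\varphi(a,S(b))$, which follows because the left-hand side is a left convolution inverse of $\varphi$ and the right-hand side a right convolution inverse --- are all correct. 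I also checked the symmetric cross case $V=B$, $h=a\otimes 1$, where the collapsing factors are $\sum\varphi(a_{(1)},y_{(1)})\varphi(S^{-1}(a_{(2)}),y_{(2)})=\varphi(\sum S^{-1}(a_{(2)})a_{(1)},\,y)=\varepsilon(a)\varepsilon(y)$; it goes through as you claim.

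One step is misjustified. The Yetter--Drinfel'd identity is indeed linear in $h$, but a general element $a\otimes b$ of $D_{\varphi}(A,B)$ is the \emph{product} $(a\otimes 1)(1\otimes b)$, not a linear combination of elements of $A\otimes 1$ and $1\otimes B$: those two subalgebras span only the proper subspace $A\otimes 1+1\otimes B$. So linearity alone does not reduce the verification to your two subcases. What you need, and what your own phrase ``vector space decomposition $D_{\varphi}(A,B)=(A\otimes 1)(1\otimes B)$'' is implicitly relying on, is that the set of $h$ for which the compatibility identity holds for all $v$ is closed under multiplication. This is true and standard: writing $L(h)(v)=\sum h_{(1)}v_{(-1)}\otimes h_{(2)}\cdot v_{(0)}$ and $R(h)(v)$ for the two sides, one gets $L(hh')(v)=\sum h_{(1)}h'_{(1)}v_{(-1)}\otimes h_{(2)}h'_{(2)}\cdot v_{(0)}$, applies the identity for $h'$ to rewrite the inner part, and then applies the identity for $h$ with $v$ replaced by $h'_{(1)}\cdot v$ to land on $R(hh')(v)$. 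Insert that lemma (one line of Sweedler calculus) and the reduction to the generators $h=a\otimes 1$ and $h=1\otimes b$ is legitimate; the rest of your proof then closes the argument.
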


More generally, we have the following result.

\begin{proposition}{\rm (see [F, Proposition~2.5])}
With the comodule structure map defined by \[\delta : H_{\varphi}(A, B)\rightarrow D_{\varphi}(A, B)\otimes H_{\varphi}(A, B),\]\[b\sharp a\mapsto \sum ((1\otimes b_{(1)})(a_{(1)}\otimes 1))\otimes b_{(2)}\sharp a_{(2)},\]
for $a\in A,$ $b\in B,$ $H_{\varphi}(A, B)$ is in the category ${}_{D_{\varphi}}^{D_{\varphi}}\mathcal{YD}.$
\end{proposition}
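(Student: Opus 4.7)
The plan is to identify $H_{\varphi}(A,B)$ with the tensor product $B\otimes A$ taken inside the braided monoidal category ${}_{D_{\varphi}}^{D_{\varphi}}\mathcal{YD}$, and then invoke the fact that this category is closed under tensor products, so no fresh calculation in $D_{\varphi}$ is required beyond what is already packaged into Proposition~2.4.

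First, I would record that as a vector space $H_{\varphi}(A,B)=B\otimes A$, and that the action of $D_{\varphi}(A,B)$ defined in Section~2.8 is exactly the diagonal tensor-product action associated to the two Schr\"{o}dinger representations of Section~2.6; this is immediate from the formula $(a\otimes b)\cdot(b'\sharp a')=\sum(a_{(1)}\otimes b_{(1)})\cdot b'\sharp(a_{(2)}\otimes b_{(2)})\cdot a'$. By Proposition~2.4, both $A$ and $B$ are objects of ${}_{D_{\varphi}}^{D_{\varphi}}\mathcal{YD}$ with comodule maps $a\mapsto a_{(1)}\otimes 1\otimes a_{(2)}$ and $b\mapsto 1\otimes b_{(1)}\otimes b_{(2)}$ respectively.

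Next, I would verify that $\delta$ is the standard tensor-product coaction on $B\otimes A$. For any two left $D_{\varphi}$-comodules $(M,\rho_M)$ and $(N,\rho_N)$, the space $M\otimes N$ is again a left $D_{\varphi}$-comodule via $m\otimes n\mapsto m_{(-1)}n_{(-1)}\otimes m_{(0)}\otimes n_{(0)}$, where the product is taken in $D_{\varphi}$. Substituting the coactions of $B$ and $A$ from Proposition~2.4 produces exactly $\delta(b\sharp a)=\sum\bigl((1\otimes b_{(1)})(a_{(1)}\otimes 1)\bigr)\otimes b_{(2)}\sharp a_{(2)}$. Coassociativity and counitality of $\delta$ then follow formally from coassociativity of $\Delta$ in $A$, $B$, and $D_{\varphi}$, together with associativity of the multiplication in $D_{\varphi}$, with no pairing computation needed.

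For the Yetter-Drinfel'd compatibility condition, I would appeal to the general principle that in any braided Hopf algebra setting the tensor product of two objects of ${}_{H}^{H}\mathcal{YD}$ is again an object of ${}_{H}^{H}\mathcal{YD}$ under the diagonal action and multiplicative coaction. Since by Proposition~2.4 both $A$ and $B$ individually satisfy the compatibility axiom over $D_{\varphi}$, the identification $H_{\varphi}(A,B)\cong B\otimes A$ in ${}_{D_{\varphi}}^{D_{\varphi}}\mathcal{YD}$ transports the YD axiom from the tensor product to $H_{\varphi}(A,B)$ itself.

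The step I would expect to be the main obstacle is bookkeeping rather than conceptual: the multiplication $(1\otimes b_{(1)})(a_{(1)}\otimes 1)$ appearing in $\delta$ is internal to $D_{\varphi}$ and, when expanded by the quantum-double multiplication formula of Section~2.5, it produces extra $\varphi$-values that one would have to carry through if the proof were done by direct Sweedler calculation. The categorical route above avoids this expansion entirely by deferring all such bookkeeping to Proposition~2.4; the only place it could fail is if the module action of Proposition~2.3 did not coincide with the diagonal action on $B\otimes A$, and this is visible directly from the defining formula.
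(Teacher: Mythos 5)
Your proposal is correct, and it is worth noting that the paper itself offers no argument here at all: Proposition 2.5 is simply quoted from [F]. Your categorical route is the natural one and it works: the quantum double $D_{\varphi}(A,B)$ is a Hopf algebra, the category ${}_{D_{\varphi}}^{D_{\varphi}}\mathcal{YD}$ is monoidal with the diagonal action and the codiagonal coaction $m\otimes n\mapsto \sum m_{(-1)}n_{(-1)}\otimes m_{(0)}\otimes n_{(0)}$ on tensor products, and the action of Section 2.8 and the map $\delta$ are, by inspection of their defining formulas together with the coactions of Proposition 2.4, exactly this tensor-product structure on $B\otimes A$. Since the proposition only asserts membership in ${}_{D_{\varphi}}^{D_{\varphi}}\mathcal{YD}$ (a statement about the underlying vector space with its module and comodule structures, not about the Heisenberg multiplication), the twisted algebra structure of $H_{\varphi}(A,B)$ is irrelevant, and your observation that all $\varphi$-bookkeeping is absorbed into Proposition 2.4 is accurate. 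The one thing a fastidious reader might ask you to spell out is the standard one-line Sweedler verification that the tensor product of two Yetter--Drinfel'd modules again satisfies the compatibility axiom (apply the axiom first for $N$, then for $M$); you invoke it as a known general principle, which is acceptable, but including the two-line computation would make the argument fully self-contained. If instead one wanted the stronger statement that $H_{\varphi}(A,B)$ is a Yetter--Drinfel'd module \emph{algebra} (i.e., that $\delta$ is multiplicative for the Heisenberg product), a genuine pairing computation would be required, but that is not what is claimed here.
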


\section{Constructions of two-parameter Kashiwara algebras}

This section is devoted to the construction of three important quantum algebras: two-parameter quantum groups, two-parameter Kashiwara algebras and two-parameter quantized Weyl algebras from the machinery built in the last section. Throughout this paper, we always assume that $rs^{-1}\in \mathbb{C}^{*}$ is not a root of unity.

\subsection{Constructions of two-parameter quantum groups $U_{r,s}(\mathfrak{g})$}

Let us start with some notations. For $n>0,$ set\\
$$(n)_{v}=1+v+v^{2}+\cdots+v^{n-1}=\frac{v^{n}-1}{v-1};$$
$$~(n)_{v}^{!}=(1)_{v}(2)_{v}\cdots (n)_{v}~~~~ \mbox{and} ~~~~(0)_{v}^{!}=1;$$
$$\hspace*{-0.6cm}\binom{n}{k}_{v}=\frac{(n)_{v}^{!}}{(k)_{v}^{!}(n-k)_{v}^{!}}~~~~\mbox{for}~n\geq k\geq 0.$$

Let $\mathfrak{g}$ be a finite-dimensional complex semi-simple Lie algebra and $A=(a_{ij})_{i,j\in I}$ be the corresponding Cartan matrix of finite type. Let $\{d_{i}|i\in I\}$ be a set of relatively prime positive integers such that $d_{i}a_{ij}=d_{j}a_{ji}$ for $i,j\in I.$ Let $\Pi=\{\alpha_{i}|i\in I\}$ be the set of simple roots, $Q=\oplus_{i\in I}\mathbb{Z}\alpha_{i}$ a root lattice, $Q^{+}=\oplus_{i\in I}\mathbb{N}\alpha_{i}$ the semigroup generated by positive roots, and $\mathcal{P}$ a weight lattice.

Denote $r_i=r^{d_{i}}, s_{i}=s^{d_{i}}$ for $i\in I.$ As in [HP1], let $\langle\cdot, \cdot\rangle$ be the Euler bilinear form on $Q\times Q$ defined by

$$\langle i, j\rangle :=\langle\alpha_i, \alpha_j \rangle=\begin{cases}d_{i}a_{ij}& \hbox {if } i< j; \\ d_i& \hbox {if }
i=j;\\ 0& \hbox {if }
i>j.
\end{cases}$$

For $\lambda\in \mathcal{P},$ we linearly extend the bilinear form $\langle\cdot, \cdot\rangle$ to be defined on $\mathcal{P} \times \mathcal{P}$ such that $\langle\lambda, i\rangle=\langle\lambda, \alpha_i\rangle=\frac{1}{m}\sum\limits_{j=1}^{n}a_{j}\langle j, i\rangle,$ or $\langle i, \lambda\rangle=\langle \alpha_i, \lambda\rangle=\frac{1}{m}\sum\limits_{j=1}^{n}a_{j}\langle i, j\rangle$ for $\lambda=\frac{1}{m}\sum\limits_{j=1}^{n}a_{j}\alpha_j$ with $a_{j}\in \mathbb{Z}.$

Bricks of our construction are Hopf algebras $U_{r,s}^{\geq }(\mathfrak{g})$ and $U_{r,s}^{\leq }(\mathfrak{g})$, which are defined by generators and relations:\vskip2mm

(1) $U_{r,s}^{\geq }(\mathfrak{g})$ is generated by $E_{i},$ $K_{i}'^{\pm 1}$ ($i\in I$), with the following relations:

\[K_{i}'^{\pm 1}K_{i}'^{\mp 1}=1,~~~~K_{i}'E_{j}K_{i}'^{-1}=r^{-\langle i, j\rangle}s^{\langle j, i\rangle}E_{j};\]
\[\sum\limits_{k=0}^{1-a_{ij}}(-1)^{k}\binom{1-a_{ij}}{k}_{r_{i}s_{i}^{-1}} c_{ij}^{k}E_{i}^{1-a_{ij}-k}E_{j}E_{i}^{k}=0 ~~~(i\neq j),\]

where $c_{ij}^{k}=(r_{i}s_{i}^{-1})^{\frac{k(k-1)}{2}}r^{k\langle j, i\rangle}s^{-k\langle i, j\rangle},$ for $i\neq j.$

It has a Hopf algebra structure given by
\[\Delta(K_{i}'^{\pm 1})=K_{i}'^{\pm 1}\otimes K_{i}'^{\pm 1},~~~
\Delta(E_{i})=E_{i}\otimes K_{i}'+1\otimes E_{i};\]
\[\varepsilon(K_{i}'^{\pm 1})=1,~~~\varepsilon(E_{i})=0,~~~S(E_{i})=-E_{i}K_{i}'^{-1},~~~S(K_{i}'^{\pm 1})=K_{i}'^{\mp 1}.\]

(2) $U_{r,s}^{\leq }(\mathfrak{g})$ is generated by $F_{i},$ $K_{i}^{\pm 1}$ ($i\in I$), with the following relations:

\[K_{i}^{\pm 1}K_{i}^{\mp 1}=1,~~~~K_{i}F_{j}K_{i}^{-1}=r^{-\langle j, i\rangle}s^{\langle i, j\rangle}F_{j};\]
\[\sum\limits_{k=0}^{1-a_{ij}}(-1)^{k}\binom{1-a_{ij}}{k}_{r_{i}s_{i}^{-1}} c_{ij}^{k}F_{i}^{k}F_{j}F_{i}^{1-a_{ij}-k}=0 ~~~(i\neq j).\]

It has a Hopf algebra structure given by
\[\Delta(K_{i}^{\pm 1})=K_{i}^{\pm 1}\otimes K_{i}^{\pm 1},~~~
\Delta(F_{i})=F_{i}\otimes 1+K_{i}\otimes F_{i};\]
\[\varepsilon(K_{i}^{\pm 1})=1,~~~\varepsilon(F_{i})=0,~~~S(F_{i})=-K_{i}^{-1}F_{i},~~~S(K_{i}^{\pm 1})=K_{i}^{\mp 1}.\]

For each $\mu=\sum\limits_{i\in I}\mu_{i}\alpha_{i}\in Q,$ we define elements $K_{\mu}$ and $K_{\mu}'$ by $$K_{\mu}=\prod_{i\in I}K_{i}^{\mu_{i}},~~~~~~K_{\mu}'=\prod_{i\in I}K_{i}'^{\mu_{i}}.$$

Denote by $D_{\varphi}(U_{r,s}^{\geq }(\mathfrak{g}), U_{r,s}^{\leq }(\mathfrak{g}))$ the quantum double of $U_{r,s}^{\geq }(\mathfrak{g})$ and $U_{r,s}^{\leq }(\mathfrak{g})$, where the skew Hopf pairing $\varphi :U_{r,s}^{\geq }(\mathfrak{g})\times U_{r,s}^{\leq }(\mathfrak{g})\rightarrow \mathbb{C}$ is given by $$\varphi(E_{i}, F_{j})=\frac{\delta_{ij}}{s_{i}-r_{i}},~~~\varphi(K_{\lambda}',K_{\mu})=r^{\langle \lambda, \mu\rangle}s^{-\langle\mu, \lambda\rangle},~~~\varphi(E_{i}, K_{j})=\varphi(K_{i}', F_{j})=0.$$

From the definition of the multiplication in quantum double, we have $$(1\otimes F_{j})(E_{i}\otimes 1)=\varphi(S^{-1}(E_{i}), F_{j})K_{i}'\otimes 1+E_{i}\otimes F_{j}+ \varphi(E_{i}, F_{j})\cdot 1\otimes K_{j},$$
that is to say, $$E_{i}F_{j}-F_{j}E_{i}=\delta_{ij}\frac{K_{i}-K_{i}'}{r_{i}-s_{i}}.$$

In the same manner, we also have\[F_{j}K_{i}'=r^{-\langle i, j\rangle}s^{\langle j, i\rangle}K_{i}'F_{j}, ~~~K_{i}E_{j}K_{i}^{-1}=r^{\langle j, i\rangle}s^{-\langle i, j\rangle}E_{j}K_{i}.\]

Then the two-parameter quantum group associated to $\mathfrak{g}$ is just $U_{r,s}(\mathfrak{g})=D_{\varphi}(U_{r,s}^{\geq }(\mathfrak{g}), U_{r,s}^{\leq }(\mathfrak{g}))$, which has been defined in [HP1] and [HP2].

\subsection{Constructions of two-parameter Kashiwara algebras $B_{r,s}(\mathfrak{g})$}

The above procedure, once applied to the Heisenberg double, will give a two-parameter Kashiwara algebra. In this subsection, we will use the different notations for the generators of $U_{r,s}^{\geq }(\mathfrak{g})=\langle e_{i}, \omega_{i}'^{\pm 1}\rangle_{i\in I},$ $U_{r,s}^{\leq }(\mathfrak{g})=\langle f_{i}, \omega_{i}^{\pm 1}\rangle_{i\in I}$ for making it distinct from the quantum double case.

Now, we can compute the multiplication structure between $U_{r,s}^{\geq }(\mathfrak{g})$ and $U_{r,s}^{\leq }(\mathfrak{g})$:
\[(1\sharp \omega_{i}')(f_{j}\sharp 1)=\varphi(\omega_{i}',\omega_{j})=r^{\langle i, j\rangle}s^{-\langle j, i\rangle}f_{j}\sharp \omega_{i}',~~(1\sharp e_{i})(\omega_{j}\sharp 1)=\omega_{j}\sharp e_{i}.\]

For this reason, it is better to adopt generators $e_{i}'=(s_{i}-r_{i})\omega_{i}'^{-1}e_{i}$, then we have \[\Delta(e_{i}')=e_{i}'\otimes 1+ \omega_{i}'^{-1}\otimes e_{i}',~~\omega_{\lambda}'e_{i}'\omega_{\lambda}'^{-1}=r^{-\langle \lambda, i\rangle}s^{\langle i, \lambda\rangle}e_{i}',~~\varphi(e_{i}', f_{j})=\delta_{ij}.\]

Then, in this case we have \[(1\sharp e_{i}')(\omega_{j}\sharp 1)=\varphi(\omega_{i}'^{-1}, \omega_{j})\omega_{j}\sharp e_{i}'=r^{-\langle i, j\rangle}s^{\langle j, i\rangle}\omega_{j}\sharp e_{i}',\]
which is what we have desired, that is, $\omega_{j}e_{i}'\omega_{j}^{-1}=r^{\langle i, j\rangle}s^{-\langle j, i\rangle}e_{i}'.$

We calculate the relation between $e_{i}'$ and $f_{j}$ as follows:\[(1\sharp e_{i}')(f_{j}\sharp 1)=\varphi(e_{i}', f_{j})\cdot 1+ \varphi(\omega_{i}'^{-1}, \omega_{j})f_{j}\sharp e_{i}'=r^{-\langle i, j\rangle}s^{\langle j, i\rangle}f_{j}\sharp e_{i}',\]
which is equivalent to the following form: \[e_{i}'f_{j}=r^{-\langle i, j\rangle}s^{\langle j, i\rangle}f_{j} e_{i}'+\delta_{ij}.\]

Thus all the relations have been recovered and we will set $B_{r,s}(\mathfrak{g}) :=H_{\varphi}(U_{r,s}^{\geq }(\mathfrak{g}), U_{r,s}^{\leq }(\mathfrak{g})),$ where $B_{r,s}(\mathfrak{g})$ is just the two-parameter Kashiwara algebra defined in [C], up to the following isomorphism:\[e_{i}'\mapsto e_{i}'',~~f_{i}\mapsto f_{i},~~\omega_{i}^{\pm 1}\mapsto \omega_{i}'^{\pm 1},~~\omega_{i}'^{\pm 1}\mapsto \omega_{i}^{\pm 1},~~r\mapsto s,~~s\mapsto r.\]

We will use the abbreviated notations $U_{r,s}, B_{r,s}, U_{r,s}^{\geq }, U_{r,s}^{\leq }$ for $U_{r,s}(\mathfrak{g}), B_{r,s}(\mathfrak{g}),$ $U_{r,s}^{\geq }(\mathfrak{g}), U_{r,s}^{\leq }(\mathfrak{g})$ if there is no confusion.

\subsection{Action of quantum doubles on Heisenberg doubles}

From Proposition 2.5, we get an action of $D_{\varphi}(U_{r,s}^{\geq }, U_{r,s}^{\leq })$ on $H_{\varphi}(U_{r,s}^{\geq }, U_{r,s}^{\leq })$ such that $H_{\varphi}(U_{r,s}^{\geq }, U_{r,s}^{\leq })$ is a $D_{\varphi}(U_{r,s}^{\geq }, U_{r,s}^{\leq })$-Yetter-Drinfel'd module, that is, $B_{r,s}$ is a $U_{r,s}$-Yetter-Drinfel'd module. Denote by $W_{r, s}=W_{r, s}(\mathfrak{g})$ the subalgebra of $B_{r,s}$ generated by $e_{i}'$ and $f_{i}$ ($i\in I$), which is called a two-parameter quantized Weyl algebra.

From the definition of Schr\"{o}dinger representation and Proposition 2.5, we have
\begin{proposition}

$B_{r,s}$ and $W_{r, s}$ are both $U_{r,s}$-module algebra; moreover, they are $U_{r,s}$-Yetter-Drinfel'd modules.
\end{proposition}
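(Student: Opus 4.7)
The plan is to deduce the proposition directly from the general machinery of Section~2 applied to $A=U_{r,s}^{\geq}$, $B=U_{r,s}^{\leq}$ with the skew Hopf pairing $\varphi$ fixed in Subsection~3.1, and then verify that the resulting structures restrict to the subalgebra $W_{r,s}$. For $B_{r,s}=H_{\varphi}(U_{r,s}^{\geq},U_{r,s}^{\leq})$ itself, everything is already in place: Proposition~2.3 gives the diagonal Schr\"{o}dinger action that makes $B_{r,s}$ a $D_{\varphi}(U_{r,s}^{\geq},U_{r,s}^{\leq})=U_{r,s}$-module algebra, and Proposition~2.5 supplies the comodule structure $\delta(b\sharp a)=\sum\big((1\otimes b_{(1)})(a_{(1)}\otimes 1)\big)\otimes b_{(2)}\sharp a_{(2)}$ that places $B_{r,s}$ in ${}_{U_{r,s}}^{U_{r,s}}\mathcal{YD}$.

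It therefore remains to show that $W_{r,s}$, the subalgebra of $B_{r,s}$ generated by $\{e_i',f_i\}_{i\in I}$, is stable under both the $U_{r,s}$-action and the $U_{r,s}$-coaction. Because the action makes $B_{r,s}$ a module algebra and the coaction makes it a comodule algebra, stability on the algebra generators $\{e_i',f_i\}$ suffices for stability on $W_{r,s}$; and since $U_{r,s}$ is generated by $\{E_i,F_i,K_i^{\pm 1},K_i'^{\pm 1}\}$, I only need to check the action of these generators. Concretely, I would compute each $X\cdot y$ for $X$ a generator of $U_{r,s}$ and $y\in\{e_j',f_j\}$ using the diagonal Schr\"{o}dinger formula together with the coproducts $\Delta(e_j')=e_j'\otimes 1+\omega_j'^{-1}\otimes e_j'$ and $\Delta(f_j)=f_j\otimes 1+\omega_j\otimes f_j$ and the pairing values $\varphi(e_i',f_j)=\delta_{ij}$. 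The outputs should all be scalars, scalar multiples of $e_j'$, or scalar multiples of $f_j$, which lie in $W_{r,s}$. An entirely analogous direct computation from the formula for $\delta$ gives $\delta(e_j')=(e_j'\otimes 1)\otimes 1+(\omega_j'^{-1}\otimes 1)\otimes e_j'$ and $\delta(f_j)=(1\otimes f_j)\otimes 1+(1\otimes \omega_j)\otimes f_j$, whose $H_{\varphi}$-tensor factors lie in $W_{r,s}$.

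Once stability under both structures is in hand, the Yetter-Drinfel'd compatibility on $W_{r,s}$ is automatic: it is inherited from the same identity in the ambient $B_{r,s}$ since both sides of the compatibility relation already take values in the closed subspaces $U_{r,s}\otimes W_{r,s}$ and $W_{r,s}\otimes U_{r,s}$. The main obstacle is purely bookkeeping: keeping the two-parameter conventions consistent when passing between $e_i\in U_{r,s}^{\geq}$ and the twisted generator $e_i'=(s_i-r_i)\omega_i'^{-1}e_i\in B_{r,s}$, and interpreting the Schr\"{o}dinger action of $U_{r,s}^{\leq}$ on $U_{r,s}^{\geq}$ correctly via the antipode, so that the resulting coefficients of the form $r^{\langle j,i\rangle}s^{-\langle i,j\rangle}$ match the commutation relations recorded in Subsection~3.2 exactly.
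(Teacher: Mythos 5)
Your proposal is correct and follows essentially the same route as the paper, which simply invokes the Schr\"{o}dinger representation together with Propositions 2.3--2.5 for $B_{r,s}$ and leaves the stability of $W_{r,s}$ implicit; your explicit generator-by-generator check of that stability is a reasonable elaboration of what the paper omits. One small inaccuracy: the action of a generator of $U_{r,s}$ on a generator of $W_{r,s}$ is not always a scalar or a scalar multiple of a generator --- for instance $E_i\cdot e_j'$ is a quadratic expression in $e_i',e_j'$ (compare the paper's $\mathfrak{s}\mathfrak{l}_2$ formula $E\cdot e'^{n}\propto e'^{n+1}$) --- but such outputs still lie in the subalgebra $W_{r,s}$, so your conclusion stands.
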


\subsection{Examples}

In this subsection, we will compute the action of quantum double on Heisenberg double in the $\mathfrak{s}\mathfrak{l}_{2}$-case. Generators of $D_{\varphi}(U_{r,s}^{\geq }, U_{r,s}^{\leq })$ are $E, F, K^{\pm 1},$ \\$K'^{\pm 1}$; for $H_{\varphi}(U_{r,s}^{\geq }, U_{r,s}^{\leq })$, they are $e', f, \omega^{\pm 1}, \omega'^{\pm 1}.$

At first, we calculate the action of $K, K'$:\[K'\cdot e'=\omega'e'\omega'^{-1}=r^{-1}s~ e',~~~~K\cdot e'=\varphi(\omega'^{-1}, \omega^{-1})e'=rs^{-1} e',\]
\[K'\cdot f=\varphi(\omega', \omega)f=rs^{-1} f,~~~~K\cdot f=\omega f \omega^{-1}=r^{-1}s f.\]

Then, we will also give all the other actions, suppose that $m\leq n:$\vskip3mm $E^{m}\cdot e'^{n}=\frac{(n+m-1)_{rs^{-1}}^{!}}{(n-1)_{rs^{-1}}^{!}}(rs^{-1})^{-nm-\frac{m(m-1)}{2}}\cdot r^{-m} e'^{n+m};$\vskip3mm
$E^{m}\cdot f^{n}=\frac{1}{(s-r)^{m}}\cdot\frac{(n)_{rs^{-1}}^{!}}{(n-m)_{rs^{-1}}^{!}} f^{n-m};$\vskip3mm
$F^{m}\cdot e'^{n}=(-1)^{m}\cdot\frac{(n)_{rs^{-1}}^{!}}{(n-m)_{rs^{-1}}^{!}}\cdot (rs^{-1})^{m} e'^{n-m};$ \vskip3mm $F^{m}\cdot f^{n}=\prod_{i=0}^{m-1} (1-(rs^{-1})^{-(n+i)}) f^{n+m}.$ \vskip2mm

These formulas will be useful in the next section.

\section{Modules over two-parameter Kashiwara algebras}
Define $B_{r, s}^{+}$ and $B_{r, s}^{-}$ as the subalgebras of $B_{r,s}$ that are generated by $e_{i}',$ $f_{j}$ ($i,j\in I$) respectively, and $B_{r,s}^{0}$ the subalgebra generated by $\omega_{i}^{\pm 1}, \omega_{i}'^{\pm 1}$ ($i\in I$). Let $U_{r,s}^{0}$ be the subalgebra of $U_{r,s}$ generated by $K_{i}^{\pm 1}, K_{i}'^{\pm 1}$ ($i\in I$).

\subsection{Constructions of $W_{r, s}(\mathfrak{g})$ from braiding}

We have seen in the previous section that $W_{r, s}(\mathfrak{g})$ is in ${}_{U_{r,s}}^{U_{r,s}}\mathcal{YD}.$

On $B_{r, s}^{+}$, there is a $U_{r,s}$-Yetter-Drinfel'd module algebra structure: the $U_{r,s}$-module structure is given by the Schr\"{o}dinger representation and the $U_{r,s}$-comodule structure is given by $\delta(e_{i}')=(s_{i}-r_{i})\cdot K_{i}'^{-1}E_{i}\otimes 1+ K_{i}'^{-1} \otimes e_{i}'.$ These structures are compatible because $\delta$ is just $\Delta$ in $U_{r,s}.$

On $B_{r, s}^{-}$, there is a $U_{r,s}$-Yetter-Drinfel'd module algebra structure: the $U_{r,s}$-module structure is given by the Schr\"{o}dinger representation and the $U_{r,s}$-comodule structure is given by $\delta(f_{i})= F_{i}\otimes 1+ K_{i} \otimes f_{i}.$ These structures are compatible because $\delta$ is just $\Delta$ in $U_{r,s}.$

In the category ${}_{U_{r,s}}^{U_{r,s}}\mathcal{YD}$, we can use the braiding to give the tensor product of two module algebras a structure of algebra. For our purpose, consider $W_{r, s}\otimes W_{r, s},$ and denote the braiding by $\sigma;$ then $(m\otimes m)\circ(\mathrm{id}\otimes \sigma \otimes \mathrm{id}):$\[W_{r, s}\otimes W_{r, s}\otimes W_{r, s}\otimes W_{r, s}\rightarrow W_{r, s}\otimes W_{r, s}\otimes W_{r, s}\otimes W_{r, s}\rightarrow W_{r, s}\otimes W_{r, s},\] gives $W_{r, s}\otimes W_{r, s}$ a structure of algebra. We will denote this algebra by $W_{r, s}\underline{\otimes} W_{r, s}.$

We want to restrict this braiding to the subspace $B_{r, s}^{-}\otimes B_{r, s}^{+}\subset W_{r, s}\otimes W_{r, s}$. This requires to restrict the $U_{r,s}$-comodule structure on $W_{r, s}$ to $B_{r, s}^{+}$ and the $U_{r,s}$-module structure on $W_{r, s}$ to $B_{r, s}^{-}.$ This comodule structure could be directly restricted as we did in the beginning of this section; the possibility for the restriction of the module structure comes from the fact that the Schr\"{o}dinger representation makes $B_{r, s}^{-}$ stable. This gives an algebra $B_{r, s}^{-}\underline{\otimes}B_{r, s}^{+}.$

We calculate the product in $B_{r, s}^{-}\underline{\otimes}B_{r, s}^{+}:$\vskip2mm
$~~~~~~(f_{i}\otimes 1)(1\otimes e_{j}')=f_{i}\otimes e_{j}',$
\begin{align*}(1\otimes e_{i}')(f_{j}\otimes 1)&=\sum (e_{i}')_{(-1)}\cdot f_{j} \otimes (e_{i}')_{(0)}\\
&=((s_{i}-r_{i})\cdot K_{i}'^{-1}E_{i})\cdot f_{j}\otimes 1+ K_{i}'^{-1}\cdot f_{j}\otimes e_{i}'\\
&=\delta_{ij}+ r^{-\langle i, j\rangle}s^{\langle j,i\rangle}f_{j}\otimes e_{i}'.
\end{align*}

This is nothing but the relations in $W_{r, s}(\mathfrak{g})$. Moreover, as a vector space, $W_{r, s}(\mathfrak{g})$ has a decomposition $W_{r, s}(\mathfrak{g})\cong B_{r, s}^{-} \otimes B_{r, s}^{+},$ and the inverse map is given by the multiplication; so we have the following proposition.
\begin{proposition}

There exists an algebra isomorphism:\[B_{r, s}^{-}\underline{\otimes}B_{r, s}^{+}\cong W_{r, s}(\mathfrak{g}),~~~~f\otimes e\mapsto fe,\]
where $f\in B_{r, s}^{-},$ $e\in B_{r, s}^{+}.$

\end{proposition}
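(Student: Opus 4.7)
The plan is to define $\mu : B_{r,s}^{-}\underline{\otimes} B_{r,s}^{+} \to W_{r,s}(\mathfrak{g})$ by $\mu(f\otimes e) = fe$ and verify that it is both an algebra homomorphism and a linear bijection. Since the underlying vector spaces agree, the content of the proposition lies in matching the two algebra structures.

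First I would check that $\mu$ is an algebra homomorphism. Because $B_{r,s}^{-}$ and $B_{r,s}^{+}$ sit inside $W_{r,s}(\mathfrak{g})$ as subalgebras, $\mu$ restricted to either tensor factor is the obvious inclusion, so products of the form $(f\otimes 1)(f'\otimes 1)$ and $(1\otimes e)(1\otimes e')$ are automatically preserved. The only nontrivial check is on a cross term $(1\otimes e_i')(f_j\otimes 1)$, and this is precisely what the computation immediately preceding the proposition records: the braided product equals $\delta_{ij} + r^{-\langle i,j\rangle}s^{\langle j,i\rangle} f_j\otimes e_i'$, which, after applying $\mu$, matches the defining relation $e_i'f_j = r^{-\langle i,j\rangle}s^{\langle j,i\rangle} f_j e_i' + \delta_{ij}$ of $W_{r,s}(\mathfrak{g})$ established in Section~3.2. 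Since both $B_{r,s}^{-}\underline{\otimes} B_{r,s}^{+}$ and $W_{r,s}(\mathfrak{g})$ are generated by $\{f_i, e_j'\}_{i,j\in I}$ with these and the intra-factor relations, agreement on generators suffices.

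Next I would establish bijectivity. Surjectivity is clear: every generator $f_i = \mu(f_i\otimes 1)$ and $e_i' = \mu(1\otimes e_i')$ lies in the image, and the image is a subalgebra. For injectivity I would invoke a triangular decomposition of $W_{r,s}(\mathfrak{g})$. Using the cross relation $e_i'f_j = r^{-\langle i,j\rangle}s^{\langle j,i\rangle} f_j e_i' + \delta_{ij}$ one can reorder any monomial in the $e_i'$ and $f_j$ so that all $f_j$-factors precede all $e_i'$-factors, showing that the multiplication map $B_{r,s}^{-}\otimes B_{r,s}^{+} \to W_{r,s}(\mathfrak{g})$ is surjective at the level of vector spaces. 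Linear independence of such ordered monomials, i.e., the fact that multiplication is also injective, is the standard PBW-type statement for the two-parameter quantized Weyl algebra which has been established in [C] (and follows from the diamond lemma applied to the straightening rule above).

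The main obstacle I expect is precisely this PBW-type vector space decomposition $W_{r,s}(\mathfrak{g}) \cong B_{r,s}^{-}\otimes B_{r,s}^{+}$: the algebra homomorphism statement reduces to a single generator relation, but verifying linear independence requires ruling out hidden relations produced by iterating the cross rule. Granting this decomposition, the map $\mu$ is bijective, and combined with the homomorphism property it gives the desired algebra isomorphism.
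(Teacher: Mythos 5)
Your proposal is correct and follows essentially the same route as the paper: the paper's argument consists precisely of the computation of the cross term $(1\otimes e_i')(f_j\otimes 1)=\delta_{ij}+r^{-\langle i,j\rangle}s^{\langle j,i\rangle}f_j\otimes e_i'$ in the braided tensor product, the observation that this reproduces the defining relation of $W_{r,s}(\mathfrak{g})$, and an appeal to the triangular vector-space decomposition $W_{r,s}(\mathfrak{g})\cong B_{r,s}^{-}\otimes B_{r,s}^{+}$ with inverse given by multiplication. Your extra care about justifying that decomposition (via a PBW/diamond-lemma argument) addresses the one point the paper simply asserts; note it also follows directly from the Heisenberg double construction, since $H_\varphi(A,B)$ is $B\otimes A$ as a vector space and the product formula keeps $B_{r,s}^{-}\sharp B_{r,s}^{+}$ closed.
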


\subsection{Modules over $W_{r, s}(\mathfrak{g})$}

This subsection is devoted to studying modules over $W_{r, s}(\mathfrak{g})$ with finiteness conditions.

We define the category $\mathcal{O}(W_{r, s})$ as a full subcategory of $W_{r, s}(\mathfrak{g})$-modules which contains those $W_{r, s}(\mathfrak{g})$-modules satisfying: for any $M$ in $\mathcal{O}(W_{r, s})$ and any $m\in M,$ there exists an integer $l> 0$ such that for any $i_{1}, i_{2},\ldots, i_{l}\in I,$ we have $e_{i_{1}}'e_{i_{2}}'\cdots e_{i_{l}}'m=0.$

Let $M$ be a $W_{r, s}(\mathfrak{g})$-module in $\mathcal{O}(W_{r, s})$. The braided Hopf algebras $B_{r, s}^{+}$ and $B_{r, s}^{-}$ are both $\mathbb{N}$-graded by defining deg$(e_{i}')= $deg$(f_{i})=1.$ We denote $B_{r, s}^{+}(n)$ the finite-dimensional subspace of $B_{r, s}^{+}$ containing elements of degree $n$ and $(B_{r, s}^{+})^{g}=\bigoplus_{n\geq 0} B_{r, s}^{+}(n)^{*}$ the graded dual coalgebra of $B_{r, s}^{+}.$

Recall that there exists a pairing between $B_{r, s}^{+}$ and $B_{r, s}^{-}$ given by $\varphi(e_{i}', f_{j})=\delta_{ij}.$ Because $\varphi(B_{r, s}^{+}(n), B_{r, s}^{-}(m))=0$ for $m\neq n$ and the restriction of $\varphi$ on $B_{r, s}^{+}(n)\times B_{r, s}^{-}(n)$ for $n\geq 0$ is non-degenerate, the graded dual of $B_{r, s}^{+}$ is anti-isomorphic to $B_{r, s}^{-}$ as graded coalgebras. Thus we obtain $(B_{r, s}^{+})^{g}\cong B_{r, s}^{-}.$

From the definition, the action of $B_{r, s}^{+}$ on $M$ is locally nilpotent, and so from duality, we obtain a left $(B_{r, s}^{+})^{g}$-comodule structure on $M$. With the help of the isomorphism above, there is a left $B_{r, s}^{-}$-comodule structure on $M$ given by: if we denote $\rho : M\rightarrow B_{r, s}^{-}\otimes M$ by $\rho(m)= \sum m_{(-1)}\otimes m_{(0)},$ then for $e\in B_{r, s}^{+},$\[e\cdot m=\sum \varphi(e, m_{(-1)}) m_{(0)}.\]

Thus from a $W_{r, s}(\mathfrak{g})$-module, we obtain a $B_{r, s}^{-}$-module which is at the same time a $B_{r, s}^{-}$-comodule.

\hspace*{-0.5cm}$\mathbf{Remark ~4.1.}$ Here, for the left $B_{r, s}^{-}$-comodule structure on $M$, it is needed to consider the braided Hopf algebra structure on $B_{r, s}^{-}$, which is considered as an object of ${}_{U_{r,s}^{0}}^{U_{r,s}^{0}}\mathcal{YD};$ that is to say, $\Delta_{0} :B_{r, s}^{-}\rightarrow B_{r, s}^{-}\underline{\otimes}B_{r, s}^{-}, \Delta_{0}(f_{i})=f_{i}\otimes 1+ 1\otimes f_{i}.$ For the left $B_{r, s}^{-}$-module structure on $M,$ we need to consider $B_{r, s}^{-}$ as an object of ${}_{U_{r,s}}^{U_{r,s}}\mathcal{YD}$, so we keep the ordinary coproduct, that is, $\Delta(f_{i})=f_{i}\otimes 1+\omega_{i}\otimes f_{i}\in B_{r, s}^{\leq}\otimes B_{r, s}^{-}.$

We define a linear projection $\pi : B_{r, s}^{\leq}\rightarrow B_{r, s}^{-}$ by $ft\mapsto f\varepsilon(t),$ where $f\in B_{r, s}^{-}$ and $t\in B_{r, s}^{0}.$
\begin{proposition}
For any $f\in B_{r, s}^{-},$ $m\in M,$ we have \[\rho(f\cdot m)= \sum \pi(f_{(1)}m_{(-1)}) \otimes f_{(2)}\cdot m_{(0)}=\Delta_{0}(f)\rho(m).\]
\end{proposition}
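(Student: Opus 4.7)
The identity has two parts, and I treat them separately. The second equality $\sum \pi(f_{(1)}m_{(-1)})\otimes f_{(2)}\cdot m_{(0)}=\Delta_{0}(f)\rho(m)$ is essentially a translation between the ordinary Hopf coproduct $\Delta$ on $B_{r,s}^{\leq}$ and the braided coproduct $\Delta_{0}$ on $B_{r,s}^{-}$ regarded as an object of ${}_{U_{r,s}^{0}}^{U_{r,s}^{0}}\mathcal{YD}$. For generators, $\Delta(f_{i})=f_{i}\otimes 1+\omega_{i}\otimes f_{i}$ while $\Delta_{0}(f_{i})=f_{i}\otimes 1+1\otimes f_{i}$; when the toral piece $\omega_{i}$ in $\Delta(f_{i})$ is commuted past $m_{(-1)}$ and then killed by $\pi$, the scalar $r^{-\langle\mu,i\rangle}s^{\langle i,\mu\rangle}$ produced (for $m_{(-1)}$ of weight $-\mu$) is exactly the braiding scalar used to multiply $(1\otimes f_{i})$ against $(m_{(-1)}\otimes m_{(0)})$ in the braided tensor algebra. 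Since both $\Delta$ and $\Delta_{0}$ are multiplicative (the latter with respect to $\underline{\otimes}$), the equality on generators extends to arbitrary $f\in B_{r,s}^{-}$ by induction on degree.

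For the first equality, the substantive one, I would argue by duality. Because $\varphi$ restricts to a non-degenerate pairing on $B_{r,s}^{+}(n)\times B_{r,s}^{-}(n)$ and both sides lie in $B_{r,s}^{-}\otimes M$, it suffices to show that for every $e\in B_{r,s}^{+}$, pairing the first tensor factor with $e$ produces the same element of $M$. By the definition of $\rho$ via duality, the left side evaluates to $e\cdot(f\cdot m)$, while the right side evaluates to $\sum\varphi(e,\pi(f_{(1)}m_{(-1)}))\,f_{(2)}\cdot m_{(0)}$. Thus the proposition reduces to the algebraic identity
\[e\cdot(f\cdot m)=\sum\varphi\bigl(e,\pi(f_{(1)}m_{(-1)})\bigr)\,f_{(2)}\cdot m_{(0)}\qquad(e\in B_{r,s}^{+},\ f\in B_{r,s}^{-}),\]
which is a direct reflection of how $e$ commutes past $f$ inside $W_{r,s}\cong B_{r,s}^{-}\underline{\otimes}B_{r,s}^{+}$ through the Heisenberg-double product formula of Section~2.7.

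I would verify this identity by double induction, first on $\deg f$ and then on $\deg e$. The base case $\deg f=0$ is trivial, and for $f=f_{i}$ the identity tested against $e=e_{j}'$ is just the defining relation $e_{j}'f_{i}=r^{-\langle j,i\rangle}s^{\langle i,j\rangle}f_{i}e_{j}'+\delta_{ij}$ of $W_{r,s}$ applied to $m$, combined with $e_{j}'\cdot m=\sum\varphi(e_{j}',m_{(-1)})m_{(0)}$ and the weight-matching feature $\varphi(e_{j}',m_{(-1)})\neq 0\Rightarrow m_{(-1)}\in B_{r,s}^{-}(\alpha_{j})$; arbitrary $e\in B_{r,s}^{+}$ follows by induction on $\deg e$ using multiplicativity of $\Delta$ on $e$ and of $\varphi$ in its first argument. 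The inductive step $f=f'f''$ then uses $\rho(f\cdot m)=\rho(f'\cdot(f''\cdot m))$, the identity for $f''$ and $f'$ successively, and the multiplicativity of $\Delta$ on $f$.

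The main obstacle is the bookkeeping in the inductive step: one must track the weight-dependent scalars arising when $\omega$-factors in $f_{(1)}$ sweep past $m_{(-1)}$ and are absorbed by $\pi$, and verify that these scalars reassemble into the braiding of $B_{r,s}^{-}\underline{\otimes}B_{r,s}^{-}$ so that $\Delta_{0}(f'f'')=\Delta_{0}(f')\Delta_{0}(f'')$ is compatible with the projected form $\pi((f'f'')_{(1)}m_{(-1)})\otimes(f'f'')_{(2)}\cdot m_{(0)}$. Once this bookkeeping is aligned, both equalities of the proposition follow.
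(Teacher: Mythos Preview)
Your proposal is correct. Both you and the paper reduce the first equality to testing against $e\in B_{r,s}^{+}$ via the non-degenerate pairing, arriving at the same target identity $e\cdot(f\cdot m)=\sum\varphi(e,f_{(1)}m_{(-1)})\,f_{(2)}\cdot m_{(0)}$. The difference lies in how this identity is then established. You propose a double induction on $\deg e$ and $\deg f$, grounding the base case in the defining relation $e_{j}'f_{i}=r^{-\langle j,i\rangle}s^{\langle i,j\rangle}f_{i}e_{j}'+\delta_{ij}$ and propagating via multiplicativity of $\Delta$. The paper instead dispatches the identity in one structural computation: in $W_{r,s}\cong B_{r,s}^{-}\underline{\otimes}B_{r,s}^{+}$ the braiding gives $ef=\sum(e_{(-1)}\cdot f)e_{(0)}$, the Schr\"odinger action gives $e_{(-1)}\cdot f=\sum\varphi(e_{(-1)},f_{(1)})f_{(2)}$, and since the $U_{r,s}$-comodule map $\delta$ on $B_{r,s}^{+}$ coincides with the coproduct $\Delta$, the skew-pairing axiom $\varphi(a,bb')=\sum\varphi(a_{(1)},b)\varphi(a_{(2)},b')$ collapses $\sum\varphi(e_{(-1)},f_{(1)})\varphi(e_{(0)},m_{(-1)})$ directly to $\varphi(e,f_{(1)}m_{(-1)})$. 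In effect your induction re-derives this structural identity degree by degree; the paper's route is shorter precisely because the Yetter--Drinfel'd machinery of Sections~2 and~4.1 was set up to avoid such bookkeeping. For the second equality the two arguments are essentially the same, the paper packaging it through the single relation $\Delta_{0}=(\pi\otimes\mathrm{id})\circ\Delta$ together with $\pi(f_{(1)})((f_{(2)})_{(-1)}\cdot m_{(-1)})=\pi(f_{(1)}m_{(-1)})$ rather than an explicit induction on generators.
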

\begin{proof}
At first, we calculate $\rho(f\cdot m),$ for any $e\in B_{r, s}^{+},$ we have
\begin{align*}
e(f\cdot m)=(ef)\cdot m&=\sum (e_{(-1)}\cdot f)e_{(0)}m\\
&= \sum \varphi(e_{(0)}, m_{(-1)})(e_{(-1)}\cdot f)m_{(0)}\\
&=\sum \varphi(e_{(-1)}, f_{(1)})\varphi(e_{(0)}, m_{(-1)}) f_{(2)} \cdot m_{(0)}\\
&= \sum \varphi(e, f_{(1)}m_{(-1)}) f_{(2)} \cdot m_{(0)}.
\end{align*}

Here, $f_{(1)}m_{(-1)}$ is not necessarily in $B_{r, s}^{-},$ but we always have \[\varphi(e, f_{(1)}m_{(-1)})=\varphi(e, \pi(f_{(1)}m_{(-1)})),\]
which gives the first equality.

For the second one, from the definition of braiding $\sigma$ in Section 2, we have\[\Delta_{0}(f)\rho(m)= \sum f^{(1)}((f^{(2)})_{(-1)}\cdot m_{(-1)})\otimes (f^{(2)})_{(0)} m_{(0)},\]
where $\Delta_{0}(f)=\sum f^{(1)}\otimes f^{(2)}.$ As said in Remark 4.1, we consider $B_{r, s}^{-}$ as a braided Hopf algebra when considering the comudule structure, so $f^{(2)}=(f^{(2)})_{(0)}=f_{(2)}$ and from the definition of $\Delta$ in $B_{r, s}^{-}$, we can see that \[\pi(f_{(1)})((f_{(2)})_{(-1)}\cdot m_{(-1)})=\pi(f_{(1)}m_{(-1)}).\]
Notice that $\Delta_{0}(f)=(\pi\otimes \mathrm{id})(\Delta(f)).$ So we have \[\sum f^{(1)}\otimes f^{(2)}=\sum \pi(f_{(1)})\otimes f_{(2)}\]
and the above formula gives the second equality.
\end{proof}

Proposition 4.1 shows that the compatibility relation between the module and comodule structures, so from a $W_{r, s}(\mathfrak{g})$-module $M$, we can obtain a braided $B_{r, s}^{-}$-Hopf module. Thus we have the following corollary.
\begin{corollary}
There exists an equivalence of categories $\mathcal{O}(W_{r, s}) \sim {}_{B_{r,s}^{-}}^{B_{r,s}^{-}}\mathcal{M}.$
\end{corollary}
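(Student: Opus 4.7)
The plan is to build quasi-inverse functors between $\mathcal{O}(W_{r,s})$ and ${}_{B_{r,s}^-}^{B_{r,s}^-}\mathcal{M}$. In the forward direction, given $M \in \mathcal{O}(W_{r,s})$ I keep the underlying space, restrict the $W_{r,s}$-action to $B_{r,s}^-$, and define the comodule structure $\rho$ by dualizing the locally nilpotent $B_{r,s}^+$-action via $\varphi$: concretely, $\rho(m) = \sum m_{(-1)} \otimes m_{(0)}$ is the unique finite element of $B_{r,s}^- \otimes M$ such that $e \cdot m = \sum \varphi(e, m_{(-1)}) m_{(0)}$ for all $e \in B_{r,s}^+$. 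Finiteness of the sum follows from local nilpotence together with the non-degeneracy of $\varphi$ on each graded piece $B_{r,s}^+(n) \times B_{r,s}^-(n)$, and Proposition 4.1 certifies the braided Hopf module compatibility. In the reverse direction, to a braided Hopf module $(M, \cdot, \rho)$ I keep the given $B_{r,s}^-$-action and define a $B_{r,s}^+$-action by the dual formula $e \cdot m := \sum \varphi(e, m_{(-1)}) m_{(0)}$.

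For the reverse functor to be well-defined, three things need checking. First, this is a genuine algebra action of $B_{r,s}^+$, which follows from pairing property (2) together with the coassociativity of $\rho$ with respect to the braided coproduct $\Delta_0$ on $B_{r,s}^-$. Second, the $B_{r,s}^+$-action is locally nilpotent, since for each $m$ the sum $\rho(m)$ is finite, so $m_{(-1)}$ spans finitely many graded components of $B_{r,s}^-$, and $\varphi$ pairs $B_{r,s}^+(n)$ only with $B_{r,s}^-(n)$. Third, and this is where I expect the main difficulty, the combined $B_{r,s}^-$ and $B_{r,s}^+$ actions must satisfy the cross-relations of $W_{r,s}$, so that they glue via the isomorphism $W_{r,s} \cong B_{r,s}^- \underline{\otimes} B_{r,s}^+$ of Proposition 4.1 into a genuine $W_{r,s}$-module.

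For this third check, the strategy is to run the computation in the proof of Proposition 4.1 in reverse: starting from the compatibility $\rho(f \cdot m) = \sum \pi(f_{(1)} m_{(-1)}) \otimes f_{(2)} \cdot m_{(0)}$, apply the defining formula for the $B_{r,s}^+$-action to obtain $e \cdot (f \cdot m) = \sum \varphi(e, f_{(1)} m_{(-1)}) f_{(2)} \cdot m_{(0)}$, then split via pairing property (1) and recognize the outcome as $\sum (e_{(-1)} \cdot f)(e_{(0)} \cdot m) = (ef) \cdot m$, where $ef$ is computed in $W_{r,s}$ through the braided tensor product. The one subtlety is that property (1) uses the ordinary coproduct $\Delta$ on $B_{r,s}^\leq$ whereas the comodule structure on $M$ is controlled by the braided coproduct $\Delta_0$ on $B_{r,s}^-$; the identity $\Delta_0 = (\pi \otimes \mathrm{id}) \circ \Delta$ highlighted in the proof of Proposition 4.1, together with Remark 4.1, reconciles the two. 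Finally, the composites $F \circ G$ and $G \circ F$ are naturally isomorphic to the respective identity functors, because the non-degeneracy of $\varphi$ on each graded piece makes the dualization procedure involutive on module and comodule structures, and morphisms in both categories are preserved automatically since the functors act as the identity on underlying vector spaces.
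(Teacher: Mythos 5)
Your proposal is correct and follows essentially the same route as the paper: dualize the locally nilpotent $B_{r,s}^{+}$-action through $\varphi$ to get the $B_{r,s}^{-}$-comodule structure and invoke Proposition 4.1 for the braided Hopf module compatibility. The paper states the corollary as an immediate consequence of Proposition 4.1 and leaves the quasi-inverse implicit; your explicit construction of the reverse functor (recovering the $B_{r,s}^{+}$-action from $\rho$ and re-running the Proposition 4.1 computation backwards) is a legitimate filling-in of that omitted verification, not a different method.
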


The following theorem gives the structural result for $W_{r, s}(\mathfrak{g})$-modules with finiteness condition.
\begin{theorem}
There exists an equivalence of categories $\mathcal{O}(W_{r, s}) \sim \mathrm{Vect},$ where $\mathrm{Vect}$ is the category of vector spaces. The equivalence is given by \[M\mapsto M^{co\rho},~~~~ V\mapsto B_{r, s}^{-}\otimes V,\]
where $M\in \mathcal{O}(W_{r, s}),$ $V\in \mathrm{Vect},$ and $M^{co\rho}=\{m\in M~|~ \rho(m)=1\otimes m\}$ is the set of coinvariants.
\end{theorem}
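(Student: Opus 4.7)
The plan is to chain together Corollary 4.1 with Proposition 2.1. Corollary 4.1 already gives an equivalence $\mathcal{O}(W_{r,s}) \sim {}_{B_{r,s}^{-}}^{B_{r,s}^{-}}\mathcal{M}$, and Proposition 2.1 tells us that every object of the latter category is trivial, i.e. of the form $B_{r,s}^{-} \otimes M^{co\rho}$. Composing these two equivalences, the data of an object $M \in \mathcal{O}(W_{r,s})$ reduces to the data of the vector space $M^{co\rho}$ of coinvariants, which gives exactly the functors stated in the theorem. Thus the substance of the proof is really a bookkeeping exercise, except for one point: one has to check that the candidate inverse $V \mapsto B_{r,s}^{-} \otimes V$ lands in $\mathcal{O}(W_{r,s})$.

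First I would spell out the forward direction. Given $M \in \mathcal{O}(W_{r,s})$, the comodule $\rho : M \to B_{r,s}^{-} \otimes M$ is obtained from the locally nilpotent action of $B_{r,s}^{+}$ together with the graded-dual identification $(B_{r,s}^{+})^{g} \cong B_{r,s}^{-}$. Proposition 4.1 then asserts that this makes $M$ a braided $B_{r,s}^{-}$-Hopf module, and Proposition 2.1 produces the isomorphism $M \cong B_{r,s}^{-} \otimes M^{co\rho}$. Functoriality of $M \mapsto M^{co\rho}$ is immediate, since morphisms in $\mathcal{O}(W_{r,s})$ are $W_{r,s}$-linear (in particular, $B_{r,s}^{-}$-linear and compatible with the comodule structure built from the $B_{r,s}^{+}$-action), and hence preserve coinvariants.

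The main point to verify is the backward direction: given $V \in \mathrm{Vect}$, equip $B_{r,s}^{-} \otimes V$ with the trivial braided Hopf module structure from Example 2.1, and then transport this back, via Corollary 4.1, to a $W_{r,s}$-module structure. Concretely, $B_{r,s}^{-}$ acts by multiplication on the first tensorand, and for $e \in B_{r,s}^{+}$ the action is $e \cdot (f \otimes v) = \sum \varphi(e, f_{(1)}) f_{(2)} \otimes v$, read off from the comodule structure. The hard part — and the only real calculation — is checking the local nilpotency condition defining $\mathcal{O}(W_{r,s})$. Here I would use the $\mathbb{N}$-grading $\deg(f_i) = \deg(e_i') = 1$: the pairing $\varphi$ matches $B_{r,s}^{+}(n)$ with $B_{r,s}^{-}(n)$, so $e_i' \cdot (f \otimes v)$ is a sum of terms of the form $f' \otimes v$ with $\deg f' = \deg f - 1$. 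Consequently, for $f$ of degree $n$, any product of more than $n$ generators $e_{i_1}' \cdots e_{i_l}'$ annihilates $f \otimes v$, which is exactly the finiteness condition required; since a general element of $B_{r,s}^{-} \otimes V$ is a finite sum of such $f \otimes v$'s, local nilpotency follows.

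Finally, I would combine everything: Proposition 2.1 gives natural isomorphisms $M \cong B_{r,s}^{-} \otimes M^{co\rho}$ and $(B_{r,s}^{-} \otimes V)^{co\rho} \cong V$ inside ${}_{B_{r,s}^{-}}^{B_{r,s}^{-}}\mathcal{M}$, and Corollary 4.1 transports these to $\mathcal{O}(W_{r,s})$, so the two functors are mutually quasi-inverse. I expect the main obstacle to be strictly the local-nilpotency check of the preceding paragraph; every other ingredient is either already proved in the paper or formal.
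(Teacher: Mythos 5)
Your proposal is correct and follows essentially the same route as the paper: the paper's proof is exactly the two-line composition of Corollary 4.1 with Proposition 2.1. The only difference is that you explicitly verify that $B_{r,s}^{-}\otimes V$ satisfies the local nilpotency condition of $\mathcal{O}(W_{r,s})$ via the $\mathbb{N}$-grading, a detail the paper leaves implicit.
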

\begin{proof}
We have seen in the Corollary 4.1 that $\mathcal{O}(W_{r, s})$ and ${}_{B_{r,s}^{-}}^{B_{r,s}^{-}}\mathcal{M}$ are equivalent; so the theorem comes from the triviality of braided Hopf modules as shown in Proposition 2.1.
\end{proof}

It is better to write down an explicit formula for $\rho$. For $\beta\in Q^{+}\cup \{0\},$ we define \[(B_{r, s}^{+})_{\beta}=\{x\in B_{r, s}^{+}|~\omega_{i}x\omega_{i}^{-1}=r^{\langle\beta, i\rangle}s^{-\langle i, \beta\rangle}x, \omega_{i}'x\omega_{i}'^{-1}=r^{-\langle i, \beta\rangle}s^{\langle \beta, i\rangle}x, \forall~ i\in I\}.\]

We can define $(B_{r, s}^{-})_{-\beta}$ in a similar manner. Elements in $(B_{r, s}^{+})_{\beta}$ (respectively $(B_{r, s}^{-})_{-\beta}$) are called of degree $\beta$ (respectively $-\beta$). Let $e_{\alpha, i}\in (B_{r, s}^{+})_{\alpha},$ $1\leq i\leq \mathrm{dim} ((B_{r, s}^{+})_{\alpha})$ be a basis of $B_{r, s}^{+},$ $f_{\beta, j}$ be the dual basis with respect to $\varphi,$ such that \[\varphi(e_{\alpha, i}, f_{\beta, j})=\delta_{ij}\delta_{\alpha\beta}.\]

We define $\mathcal{R}=\sum_{i,\alpha}f_{\alpha, i}\otimes e_{\alpha, i},$ for $M\in \mathcal{O}(W_{r, s}),$ then $\mathcal{R}(1\otimes m)$ is well-defined for every $m\in M.$

\begin{proposition}
For each $m\in M,$ we have $\rho(m)=\mathcal{R}(1\otimes m).$
\end{proposition}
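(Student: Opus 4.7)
The plan is to test both sides of the claimed identity against an arbitrary homogeneous $e\in B_{r,s}^{+}$ by applying the map $\varphi(e,\cdot)\otimes\mathrm{id}_{M} : B_{r,s}^{-}\otimes M\to M$ on the first tensor factor. Because $\varphi$ restricts to a perfect pairing on each graded piece $(B_{r,s}^{+})_{\beta}\times (B_{r,s}^{-})_{-\beta}$, the agreement of the two outputs for every such $e$ will force equality of $\rho(m)$ and $\mathcal{R}(1\otimes m)$ in each graded component of $B_{r,s}^{-}\otimes M$.

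First I would check that $\mathcal{R}(1\otimes m)$ is a well-defined finite sum. The defining condition of $\mathcal{O}(W_{r,s})$ says that the $B_{r,s}^{+}$-action on $m$ is locally nilpotent, so $e_{\alpha,i}\cdot m=0$ for all but finitely many $(\alpha,i)$, and only finitely many terms of $\sum_{\alpha,i} f_{\alpha,i}\otimes e_{\alpha,i}\cdot m$ are non-zero. Symmetrically, the comodule structure $\rho$ is built by dualising this locally nilpotent action, so $\rho(m)$ has finite homogeneous support in $B_{r,s}^{-}$.

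Next I would apply $\varphi(e,\cdot)\otimes\mathrm{id}_{M}$ to both sides. On the left-hand side, by the very definition of $\rho$ recalled just before the proposition, this produces $\sum \varphi(e,m_{(-1)})\,m_{(0)} = e\cdot m$. On the right-hand side it produces $\sum_{\alpha,i}\varphi(e,f_{\alpha,i})\,(e_{\alpha,i}\cdot m)$; writing $e$ in the chosen basis and using the duality $\varphi(e_{\alpha,i},f_{\beta,j})=\delta_{\alpha\beta}\delta_{ij}$ one has $\sum_{\alpha,i}\varphi(e,f_{\alpha,i})\,e_{\alpha,i}=e$, so this equals $e\cdot m$ as well.

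Finally, since the two elements $\rho(m)$ and $\mathcal{R}(1\otimes m)$ of $B_{r,s}^{-}\otimes M$ coincide after pairing the first factor with every $e\in B_{r,s}^{+}$, and the pairing is non-degenerate on each $(B_{r,s}^{+})_{\beta}\times (B_{r,s}^{-})_{-\beta}$, they are equal in each graded component and hence equal as elements of $B_{r,s}^{-}\otimes M$. I do not expect a serious obstacle here; the only point to be careful about is ensuring finiteness of the relevant homogeneous supports so that the non-degeneracy argument can be applied component by component, which is precisely what local nilpotency guarantees.
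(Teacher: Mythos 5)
Your proof is correct and is essentially the paper's argument: the paper expands $\rho(m)=\sum_{j,\alpha}f_{\alpha,j}\otimes m_{\alpha,j}$ and pairs with the basis elements $e_{\beta,i}$ to read off $m_{\beta,i}=e_{\beta,i}\cdot m$ directly, whereas you pair with an arbitrary homogeneous $e$ and invoke non-degeneracy on each graded piece --- the same computation packaged slightly differently. Your extra care about finiteness of the homogeneous support (via local nilpotency) is a reasonable addition but not a departure from the paper's route.
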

\begin{proof}
For each $m\in M,$ we can write $\rho(m)=\sum_{j,\alpha}f_{\alpha, j}\otimes m_{\alpha, j},$ then from the definition of left comodule structure,\[e_{\beta, i}\cdot m=\sum\limits_{j,\alpha} \varphi(e_{\beta, i}, f_{\alpha, j})m_{\alpha, j}=m_{\beta, i}.\]
So $\rho(m)=\sum_{i,\alpha}f_{\alpha, i}\otimes e_{\alpha, i}\cdot m=\mathcal{R}(1\otimes m).$
\end{proof}

Now, let us look at an example.

$\mathbf{Example ~4.1.}$ Consider the $\mathfrak{s}\mathfrak{l}_{2}$-case, generators of $B_{r, s}(\mathfrak{s}\mathfrak{l}_{2})$ will be denoted by $e, f, \omega^{\pm 1}, \omega'^{\pm 1}.$ We choose $m\in M$ such that $e\cdot m\neq 0,$ $e^{2}\cdot m=0.$ From $e^{2}f=(r^{-1}s)^{2}fe^{2}+(r^{-1}s+1)e,$ we can get
\begin{align*}
\rho(f\cdot m)&=1\otimes fm+ f\otimes efm+ f^{2}\otimes \frac{e^{2}fm}{r^{-1}s+1}\\&=1\otimes fm+f\otimes r^{-1}s\cdot fem+f\otimes m+f^{2}\otimes em,
\end{align*}
\begin{align*}
(\pi\otimes \mathrm{id})(\Delta(f)\rho(m))&=(\pi\otimes \mathrm{id})(f\otimes m+ f^{2}\otimes em+ \omega\otimes fm+ \omega f\otimes fem)\\
&=f\otimes m+ f^{2}\otimes em+ 1\otimes fm + f\otimes r^{-1}s\cdot fem,
\end{align*}
\begin{align*}
\Delta_{0}(f)\rho(m)=f\otimes m+ f^{2}\otimes em+ 1\otimes fm +r^{-1}s\cdot f\otimes fem.
\end{align*}

For a $W_{r, s}(\mathfrak{g})$-module $M,$ $0\neq m\in M$ is called a maximal vector if it is annihilated by all $e_{i}'$ ($i\in I$). The set of all maximal vectors in $M$ is denoted by $K(M).$ The following lemma is a direct consequence of the definition.
\begin{lemma}
Suppose that $m\in M^{co\rho}.$ Then for any non-constant element $e\in B_{r, s}^{+},$ we have $e\cdot m=0.$
\end{lemma}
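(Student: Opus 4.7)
The plan is to unwind directly the two main inputs the excerpt has already set up: the formula expressing the $B_{r,s}^{+}$-action on $M$ in terms of the $B_{r,s}^{-}$-comodule structure, and the coinvariance condition $\rho(m)=1\otimes m$. Concretely, recall from the construction just before Remark 4.1 that for $e\in B_{r,s}^{+}$ and $m\in M$ one has
\[ e\cdot m=\sum \varphi(e,m_{(-1)})\,m_{(0)},\]
where $\rho(m)=\sum m_{(-1)}\otimes m_{(0)}$ is the left $B_{r,s}^{-}$-coaction coming from the duality $(B_{r,s}^{+})^{g}\cong B_{r,s}^{-}$.

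First I would take $m\in M^{co\rho}$, so that by definition $\rho(m)=1\otimes m$. Substituting into the displayed formula gives $e\cdot m=\varphi(e,1)\,m$. Then I would invoke axiom~(3) of a skew Hopf pairing from Section~2.4, which says $\varphi(e,1)=\varepsilon(e)$. Finally, the hypothesis that $e$ is non-constant has to be interpreted in the natural $\mathbb{N}$-grading on $B_{r,s}^{+}$ introduced just after the definition of $\mathcal{O}(W_{r,s})$: non-constant means $e\in\bigoplus_{n\geq 1}B_{r,s}^{+}(n)$, and the counit $\varepsilon$ vanishes on each $B_{r,s}^{+}(n)$ with $n\geq 1$ (since $\varepsilon(e_{i}')=0$ and $\varepsilon$ is an algebra map). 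Hence $\varepsilon(e)=0$ and therefore $e\cdot m=0$, which is the claim.

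There is really no genuine obstacle; the only point that requires a bit of care is making sure that the formula $e\cdot m=\sum\varphi(e,m_{(-1)})m_{(0)}$ is the correct incarnation of the duality used to define the $B_{r,s}^{-}$-coaction, and that under $(B_{r,s}^{+})^{g}\cong B_{r,s}^{-}$ the constant $1\in B_{r,s}^{-}$ pairs with $e\in B_{r,s}^{+}$ via the counit of $B_{r,s}^{+}$. Both of these are already stated in the build-up to Proposition~4.1, so the lemma follows by direct substitution with no computation beyond invoking those two ingredients.
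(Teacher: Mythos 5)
Your proof is correct and is exactly the argument the paper has in mind: the paper gives no proof at all (it calls the lemma ``a direct consequence of the definition''), and your writeup simply unwinds that definition, substituting $\rho(m)=1\otimes m$ into $e\cdot m=\sum\varphi(e,m_{(-1)})m_{(0)}$ and using $\varphi(e,1)=\varepsilon(e)=0$ (equivalently, the vanishing $\varphi(B_{r,s}^{+}(n),B_{r,s}^{-}(0))=0$ for $n\geq 1$). Your reading of ``non-constant'' as $e\in\bigoplus_{n\geq 1}B_{r,s}^{+}(n)$ is the right one, since the statement would fail for elements with a nonzero constant term.
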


\begin{lemma}
Let $f\in B_{r, s}^{-},$ $f\notin \mathbb{C}^{*},$ such that for any $i\in I$, $e_{i}'\cdot f=0.$ Then $f=0.$
\end{lemma}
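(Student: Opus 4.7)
The plan is to interpret $f$ as a vector in the canonical $W_{r,s}(\mathfrak{g})$-module $B_{r,s}^{-}$ that one obtains by applying the inverse equivalence of Theorem~4.1 to $V=\mathbb{C}$, and to show that the hypothesis forces $f$ to be a coinvariant, hence a scalar.

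The first step is to upgrade the hypothesis from the generators $e_{i}'$ to all of $B_{r,s}^{+}$. Since $B_{r,s}^{+}$ is generated as an algebra by the $e_{i}'$ ($i\in I$), every element $e$ of positive degree can be written in the form $e=\sum_{j}e^{(j)}\,e_{i_{j}}'$ with $e^{(j)}\in B_{r,s}^{+}$. Because $B_{r,s}^{-}$ is a $W_{r,s}(\mathfrak{g})$-module and the $B_{r,s}^{+}$-action is the restriction of the module action, associativity gives
\[
e\cdot f=\sum_{j}e^{(j)}\cdot(e_{i_{j}}'\cdot f)=0.
\]
Thus $e\cdot f=0$ for every non-constant $e\in B_{r,s}^{+}$.

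The second step is to plug this into the explicit formula $\rho(m)=\mathcal{R}(1\otimes m)=\sum_{\alpha,i}f_{\alpha,i}\otimes(e_{\alpha,i}\cdot m)$ from Proposition~4.2, applied with $m=f$. Using a homogeneous basis $\{e_{\alpha,i}\}$ of $B_{r,s}^{+}$, the only contribution from $\alpha=0$ is $1\otimes f$, while every contribution from $\alpha\neq 0$ vanishes by the previous step. Therefore $\rho(f)=1\otimes f$, i.e. $f\in (B_{r,s}^{-})^{co\rho}$. Finally, Theorem~4.1 identifies $(B_{r,s}^{-})^{co\rho}$ (as the $M^{co\rho}$ attached to the $W_{r,s}$-module $M=B_{r,s}^{-}\otimes\mathbb{C}$) with the scalar line $\mathbb{C}\cdot 1\subset B_{r,s}^{-}$. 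Hence $f\in\mathbb{C}$, and the standing hypothesis $f\notin\mathbb{C}^{*}$ forces $f=0$.

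The only genuine conceptual subtlety, and therefore the main obstacle, is the correct identification of the symbol ``$e_{i}'\cdot f$'' as the Schr\"odinger-type derivation action of $e_{i}'\in B_{r,s}^{+}$ on the module $B_{r,s}^{-}$: concretely, the action obtained by pushing $e_{i}'$ past the $f_{j}$'s inside $B_{r,s}$ and then annihilating the highest-weight vector~$1$. Once this identification is in place the lemma is an immediate corollary of Proposition~4.2 and Theorem~4.1, and no explicit induction on degree or calculation with $(rs^{-1})$-numbers is required.
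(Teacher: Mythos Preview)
Your argument is correct and non-circular: Theorem~4.1 and the $\mathcal{R}$-formula (which is Proposition~4.3 in the paper's numbering, not~4.2) are proved before Lemma~4.2 and do not rely on it, so invoking them is legitimate. The first step---passing from the generators $e_{i}'$ to all non-constant $e\in B_{r,s}^{+}$---is identical to the paper's.

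The difference lies in the second step. The paper works directly with the defining formula $e\cdot f=\sum\varphi(e,f_{(1)})f_{(2)}$: choosing the $f_{(2)}$ linearly independent, vanishing of $e\cdot f$ for all non-constant $e$ forces $\varphi(e,f_{(1)})=0$, and non-degeneracy of $\varphi$ then gives $f_{(1)}\in\mathbb{C}$, whence $f=\sum f_{(1)}\varepsilon(f_{(2)})\in\mathbb{C}$. Your route instead packages this pairing computation into $\rho(f)=\mathcal{R}(1\otimes f)=1\otimes f$ and then appeals to Theorem~4.1 to identify the coinvariants of $B_{r,s}^{-}$ with $\mathbb{C}\cdot 1$. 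The paper's argument is more self-contained and exposes the precise role of non-degeneracy; your argument is cleaner once the structural machinery is in place and shows that the lemma is really just the special case $M=B_{r,s}^{-}$ of Corollary~4.2.
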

\begin{proof}
If $e_{i}'\cdot f=0$ for any $i,$ $f$ is annihilated by all non-constant elements in $B_{r, s}^{+}.$ For any $e\in B_{r, s}^{+},$ $e\cdot f=\sum \varphi(e, f_{(1)})f_{(2)},$ we can suppose that these $f_{(2)}$ are linearly independent, so $\varphi(e, f_{(1)})$ for any $f_{(1)}$ and any non-constant $e\in B_{r, s}^{+},$ the non-degeneracy of the Hopf pairing forces $f_{(1)}$ to be constants.

So now $f=(\mathrm{id}\otimes \varepsilon)\Delta(f)=\sum f_{(1)}\varepsilon(f_{(2)})\in \mathbb{C}$ and it must be 0 from the hypothesis.\end{proof}

Combined with Theorem 4.1, Lemma 4.1 and 4.2, we can get

\begin{corollary}
Let $M\in \mathcal{O}(W_{r, s})$ be a $W_{r, s}$-module. Then $M^{co\rho}=K(M).$
\end{corollary}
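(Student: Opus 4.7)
The statement is an equality of two subsets of $M$, so I would prove the two inclusions separately.

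The inclusion $M^{co\rho} \subseteq K(M)$ is essentially Lemma 4.1: if $m\in M^{co\rho}$, then for every non-constant $e\in B_{r,s}^{+}$ we have $e\cdot m=0$; specialising to $e=e_i'$ (which is non-constant since it has positive grading) gives $e_i'\cdot m=0$ for all $i\in I$, hence $m\in K(M)$.

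For the reverse inclusion $K(M)\subseteq M^{co\rho}$, the strategy is to exploit the explicit formula for $\rho$ supplied by Proposition 4.2. First I would upgrade the hypothesis $e_i'\cdot m=0$ for all $i$ to the stronger statement that $e\cdot m=0$ for every $e\in B_{r,s}^{+}$ of positive degree. Since $B_{r,s}^{+}$ is generated as an algebra by the $e_i'$, any such $e$ is a linear combination of monomials $e_{i_1}'e_{i_2}'\cdots e_{i_k}'$ with $k\ge1$, and by associativity of the module action
\[
(e_{i_1}'e_{i_2}'\cdots e_{i_k}')\cdot m=e_{i_1}'\cdots e_{i_{k-1}}'\cdot(e_{i_k}'\cdot m)=0.
\]

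With this in hand I would decompose the formula $\rho(m)=\mathcal{R}(1\otimes m)=\sum_{\alpha\in Q^{+}\cup\{0\},\,i} f_{\alpha,i}\otimes e_{\alpha,i}\cdot m$ from Proposition 4.2 according to degree $\alpha$. In the degree-zero piece, $(B_{r,s}^{+})_{0}=\mathbb{C}\cdot 1$, with dual vector $1\in B_{r,s}^{-}$ under $\varphi$, contributing $1\otimes m$. Every $\alpha\ne 0$ piece has $e_{\alpha,i}\in(B_{r,s}^{+})_{\alpha}$ of positive degree, so by the previous step $e_{\alpha,i}\cdot m=0$ and that piece vanishes. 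Thus $\rho(m)=1\otimes m$, i.e., $m\in M^{co\rho}$.

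I do not anticipate any genuine obstacle: the argument is bookkeeping once Proposition 4.2 and the $\mathbb{N}$-grading on $B_{r,s}^{+}$ are in hand. The only point that requires a moment of care is recognising that the hypothesis $e_i'\cdot m=0$ propagates to all of $B_{r,s}^{+}_{>0}$ via the algebra structure of the module action, and that the degree-zero component of $B_{r,s}^{+}$ is exactly the scalars (which is how the surviving term $1\otimes m$ arises).
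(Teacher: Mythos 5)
Your proof is correct, but it takes a different route from the paper for the inclusion $K(M)\subseteq M^{co\rho}$. The paper gives no written argument; it simply states that the corollary follows by combining Theorem 4.1, Lemma 4.1 and Lemma 4.2. The intended reading is: Lemma 4.1 gives $M^{co\rho}\subseteq K(M)$ (exactly as you argue), and for the converse one invokes the structure theorem $M\cong B_{r,s}^{-}\otimes M^{co\rho}$ from Theorem 4.1, under which the $e_i'$ act only on the $B_{r,s}^{-}$ factor via $e\cdot(x\otimes v)=\sum\varphi(e,x_{(1)})x_{(2)}\otimes v$; writing a maximal vector as $\sum x_j\otimes v_j$ with the $v_j$ independent forces $e_i'\cdot x_j=0$ for all $i,j$, and Lemma 4.2 then forces each $x_j$ to be a scalar, so the vector lies in $1\otimes M^{co\rho}$. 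You instead bypass Theorem 4.1 and Lemma 4.2 entirely: you propagate $e_i'\cdot m=0$ to all of positive degree in $B_{r,s}^{+}$ using that $B_{r,s}^{+}$ is generated by the $e_i'$ and that the module action is associative, and then read off $\rho(m)=1\otimes m$ from the explicit formula $\rho(m)=\mathcal{R}(1\otimes m)$ of Proposition 4.2, using that $(B_{r,s}^{+})_{0}=\mathbb{C}\cdot 1$ with dual vector $1$. Your argument is more self-contained and computational (it needs only Proposition 4.2 and the $Q^{+}$-grading), whereas the paper's leans on the heavier categorical statement already established; both are valid, and yours has the minor advantage of not requiring the nondegeneracy input hidden in Lemma 4.2.
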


\subsection{Modules over $B_{r, s}(\mathfrak{g})$}

We define the category $\mathcal{O}(B_{r,s})=\mathcal{O}(B_{r,s}(\mathfrak{g}))$ a full subcategory of left module category over $B_{r, s}(\mathfrak{g})$ containing objects satisfying the following conditions:\vskip3mm

(1) Any object $M$ has a weight space decomposition $M=\oplus_{\lambda\in \mathcal{P}}M_{\lambda},$ where $M_{\lambda}=\{u\in M|~\omega_{\mu}u=r^{\langle\lambda, \mu\rangle}s^{-\langle\mu, \lambda\rangle}u,~\omega_{\mu}'u=r^{-\langle\mu, \lambda\rangle}s^{\langle\lambda, \mu\rangle}u,~\forall \mu\in Q\}$ and $\dim M_{\lambda}<\infty$ for any $\lambda\in \mathcal{P}$.

(2) For any object $u\in M$ there exists $l>0$ such that $e_{i_{1}}'e_{i_{2}}'\cdots e_{i_{l}}'u=0$ for any $i_{1}, i_{2},\ldots, i_{l}\in I.$\vskip3mm

Moreover, we denote $\mathcal{O}'(B_{r,s})$ as the category of $B_{r,s}$-modules satisfying only (2) above.

The main theorem of this paper is the following structural result.

\begin{theorem}
There exists an equivalence of categories $\mathcal{O}'(B_{r,s}) \sim {}_{U_{r,s}^{0}}\mathcal{\mathrm{Mod}}.$ The equivalence is given by \[M\mapsto K(M),~~~~V\mapsto B_{r, s}^{-}\otimes V,\]
where $M\in \mathcal{O}'(B_{r,s}),$ $V$ is a $U_{r,s}^{0}$-module and $K(M)$ is the set of maximal vectors in $M,$ when looked upon as a $W_{r, s}$-module.

Moreover, when restricted to the subcategory $\mathcal{O}(B_{r,s}),$ the equivalence above gives $\mathcal{O}(B_{r,s})\sim {}_{\mathcal{P}}Gr,$ where the latter is the category of $\mathcal{P}$-graded vector spaces.
\end{theorem}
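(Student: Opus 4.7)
The plan is to bootstrap from Theorem~4.1, which already provides an equivalence $\mathcal{O}(W_{r,s}) \sim \mathrm{Vect}$ via $M \mapsto M^{co\rho} = K(M)$ and $V \mapsto B_{r,s}^{-} \otimes V$. A module over $B_{r,s}$ in $\mathcal{O}'(B_{r,s})$ is nothing but a module in $\mathcal{O}(W_{r,s})$ equipped with a compatible action of the Cartan part $B_{r,s}^{0}$, which we identify with $U_{r,s}^{0}$ via $\omega_{i} \leftrightarrow K_{i}$, $\omega_{i}' \leftrightarrow K_{i}'$. So the whole task is to transport this extra piece of datum across the equivalence of Theorem~4.1.

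First I would verify that $M \mapsto K(M)$ is well defined: the commutation relations $\omega_{i} e_{j}' \omega_{i}^{-1} = r^{\langle i,j\rangle} s^{-\langle j,i\rangle} e_{j}'$ and the analogue for $\omega_{i}'$ show that $K(M)$ is stable under $B_{r,s}^{0}$, so it becomes naturally a $U_{r,s}^{0}$-module. Conversely, given a $U_{r,s}^{0}$-module $V$, I would equip $B_{r,s}^{-} \otimes V$ with the $W_{r,s}$-structure supplied by Theorem~4.1 together with the diagonal Cartan action $\omega_{i} \cdot (f \otimes v) = (\omega_{i} f \omega_{i}^{-1}) \otimes (\omega_{i} \cdot v)$ and similarly for $\omega_{i}'$; since the adjoint action preserves $B_{r,s}^{-}$, this is well defined, and checking the $B_{r,s}$-relations reduces to the commutation rules between $B_{r,s}^{0}$ and $B_{r,s}^{\pm}$ plus the $W_{r,s}$-relations already in force. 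Local $e_{i}'$-nilpotency of $B_{r,s}^{-} \otimes V$ is automatic from the $\mathbb{N}$-grading of $B_{r,s}^{-}$ and the fact that $e_{i}'$ strictly lowers this degree, so $B_{r,s}^{-} \otimes V$ lies in $\mathcal{O}'(B_{r,s})$.

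To prove the two functors are quasi-inverse, the $W_{r,s}$-level isomorphism $M \cong B_{r,s}^{-} \otimes K(M)$ is immediate from Theorem~4.1 combined with Corollary~4.2, and the identity $K(B_{r,s}^{-} \otimes V) = 1 \otimes V$ follows from Lemma~4.2 plus the non-degeneracy of $\varphi$. The step I expect to be the main obstacle, and the place where the two algebra structures genuinely interact, is verifying that the $W_{r,s}$-equivariant isomorphism $M \cong B_{r,s}^{-} \otimes K(M)$ is also $B_{r,s}^{0}$-equivariant. For this I would use the explicit formula $\rho(m) = \mathcal{R}(1 \otimes m)$ of Proposition~4.2 together with the weight-homogeneity of the dual bases $e_{\alpha,i}, f_{\alpha,i}$: a weight-tracking computation shows that $\omega_{\mu}$ shifts the $B_{r,s}^{-}$-factor by the adjoint action and the $K(M)$-factor by the original action, which is precisely the diagonal formula above. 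Finally, for the restricted equivalence $\mathcal{O}(B_{r,s}) \sim {}_{\mathcal{P}}Gr$, condition~(1) translates across the equivalence into the statement that $V = K(M)$ decomposes into $\mathcal{P}$-weight spaces for $U_{r,s}^{0}$, i.e., is a $\mathcal{P}$-graded vector space; conversely, the $Q^{+}$-grading of $B_{r,s}^{-}$ combined with the diagonal Cartan action reconstitutes a $\mathcal{P}$-weight decomposition of $M = B_{r,s}^{-} \otimes V$ with the required finiteness of each weight space.
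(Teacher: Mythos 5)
Your proposal is correct and follows essentially the same route as the paper: both bootstrap from Theorem~4.1, endow $B_{r,s}^{-}\otimes V$ with the Cartan action $t\cdot(x\otimes v)=txt^{-1}\otimes tv$, and identify $K(M)=M^{co\rho}$ via Corollary~4.2 to see the two functors are quasi-inverse. You are, if anything, more explicit than the paper about the one genuinely delicate point --- verifying that the $W_{r,s}$-equivariant isomorphism $M\cong B_{r,s}^{-}\otimes K(M)$ is also $B_{r,s}^{0}$-equivariant via the formula $\rho(m)=\mathcal{R}(1\otimes m)$ and weight-tracking --- which the paper passes over in silence.
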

\begin{proof}
For any $U_{r,s}^{0}$-module $V,$ we may regard it as a vector space through the forgetful functor. So from Theorem 4.1, $N=B_{r, s}^{-}\otimes V$ admits a locally finite $W_{r, s}$-module structure such that $N^{co\rho}=V.$ Moreover, if the $U_{r,s}^{0}$-module structure on $V$ is under consideration, there exists a $B_{r,s}$-module structure over $B_{r, s}^{-}\otimes V$ given by: for $v\in V,$ $x, f\in B_{r, s}^{-},$ $e\in B_{r, s}^{+},$ $t=\omega_{i}^{\pm 1}, \omega_{i}'^{\pm 1}~(i\in I)\in B_{r,s}^{0},$ we have$$e\cdot (x\otimes v)=\sum \varphi(e, x_{(1)})x_{(2)}\otimes v,~~f\cdot (x\otimes v)=fx\otimes v,~~t\cdot (x\otimes v)=txt^{-1}\otimes tv.$$
As a summary, we have given a functor ${}_{U_{r,s}^{0}}\mathcal{\mathrm{Mod}}\rightarrow \mathcal{O}'(B_{r,s}).$

From now on, let $M\in \mathcal{O}'(B_{r,s})$ be a $B_{r,s}$-module with finiteness condition.

The restriction from $B_{r,s}$-modules to $W_{r,s}$-modules gives a functor $\mathcal{O}'(B_{r,s})$\\$\rightarrow \mathcal{O}(W_{r,s}),$ thus we obtain a functor $\mathcal{O}'(B_{r,s}) \sim {}_{U_{r,s}^{0}}\mathcal{\mathrm{Mod}}$ by composing with the equivalence functor $\mathcal{O}(W_{r,s})\rightarrow \mathrm{Vect}.$

From Theorem 4.1 and the module structure defined above, these two functors give an equivalence of categories $\mathcal{O}'(B_{r,s}) \sim {}_{U_{r,s}^{0}}\mathcal{\mathrm{Mod}}$. So the first point of this theorem follows from Corollary 4.2.

The second point follows from the equivalence of the $U_{r,s}^{0}$-modules satisfying condition (1) in $\mathcal{O}(B_{r,s})$ and $\mathcal{P}$-graded vector spaces.
\end{proof}

\subsection{Semi-simplicity of $\mathcal{O}(B_{r,s})$}

The following result is a direct corollary of Lemma 4.1 and Theorem 4.2.
\begin{corollary}
Let $M\in \mathcal{O}(B_{r,s})$ be a nontrivial $\mathcal{O}(B_{r,s})$-module. There exists nonzero maximal vectors in $M$.
\end{corollary}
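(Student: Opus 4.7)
The plan is to derive the corollary directly from the category equivalence established in Theorem~4.2, by exploiting the elementary fact that an equivalence of categories preserves (non)triviality of objects.

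First, I would observe that $\mathcal{O}(B_{r,s})$ is by definition a full subcategory of $\mathcal{O}'(B_{r,s})$, since condition~(2) appears in both definitions. Hence Theorem~4.2 applies to the module $M$ at hand: the functor $M \mapsto K(M)$ is one half of an equivalence $\mathcal{O}'(B_{r,s}) \sim {}_{U_{r,s}^{0}}\mathrm{Mod}$, with quasi-inverse $V \mapsto B_{r,s}^{-}\otimes V$. In particular, the composition yields a natural isomorphism $M \cong B_{r,s}^{-}\otimes K(M)$ of $B_{r,s}$-modules (its underlying vector-space version being exactly the isomorphism from Theorem~4.1 transported along Corollary~4.2).

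The corollary then follows by contraposition: if $K(M) = 0$, then $M \cong B_{r,s}^{-}\otimes K(M) = 0$, contradicting the assumption that $M$ is nontrivial. Therefore $K(M) \neq 0$, that is, $M$ admits a nonzero maximal vector in the sense that it is killed by every $e_{i}'$, $i \in I$.

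There is essentially no computational obstacle here; the content was already absorbed in Theorem~4.2 and Corollary~4.2, and the present statement merely reads off one of their structural consequences. A more hands-on alternative would be to pick any nonzero $u \in M$ and take a word $e_{i_{1}}'e_{i_{2}}'\cdots e_{i_{k}}'$ of maximal length with $e_{i_{1}}'e_{i_{2}}'\cdots e_{i_{k}}'u \neq 0$ (such a finite $k$ exists by condition~(2) defining $\mathcal{O}(B_{r,s})$), producing a nonzero maximal vector by hand; but the categorical argument is cleaner and is in line with the paper's conceptual emphasis.
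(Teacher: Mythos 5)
Your argument is correct and coincides with the paper's, which gives no separate proof but states the result as a direct corollary of Lemma~4.1 and Theorem~4.2 --- exactly the route you take via $M\cong B_{r,s}^{-}\otimes K(M)$ and the identification $M^{co\rho}=K(M)$. Your hands-on alternative (a word of maximal length not killing a chosen $u\neq 0$) is also valid and arguably more self-contained, but the categorical reading is what the paper intends.
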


For each $\lambda\in \Lambda,$ we define the $B$-module $H(\lambda)$ by $H(\lambda) :=B/I_{\lambda},$ where the left ideal $I_{\lambda}$ is defined as
$$I_{\lambda}=\sum\limits_{i\in I}B_{r,s} e_{i}'+\sum\limits_{\mu\in Q}B_{r,s} (\omega_{\mu}-r^{\langle\lambda, \mu\rangle}s^{-\langle\mu, \lambda\rangle})+\sum\limits_{\mu'\in Q}B_{r,s} (\omega_{\mu'}'-r^{-\langle\mu', \lambda\rangle}s^{\langle\lambda, \mu'\rangle}),$$
then $H(\lambda)$ is a free $B_{r, s}^{-}$-module of rank one, generated by 1. The following structural results follow from Theorem 4.2.
\begin{corollary}
Let $M\in \mathcal{O}(B_{r,s}),$ $v\in M$ be a maximal vector of weight $\lambda.$ Then $B_{r, s}^{-}\otimes \mathbb{C}v\rightarrow H(\lambda),$ $F\otimes v\mapsto F\cdot v$ is an isomorphism of $B_{r,s}$-modules. In particular, $H(\lambda)$ are all simple $B_{r,s}$-modules.
\end{corollary}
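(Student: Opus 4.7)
The plan is to deduce everything from Theorem 4.2 together with the explicit description of $H(\lambda)$ as a cyclic module. First I would construct the one-dimensional $U_{r,s}^{0}$-module $\mathbb{C}_{\lambda}$ defined by $\omega_{\mu}\mapsto r^{\langle\lambda,\mu\rangle}s^{-\langle\mu,\lambda\rangle}$ and $\omega_{\mu}'\mapsto r^{-\langle\mu,\lambda\rangle}s^{\langle\lambda,\mu\rangle}$, and apply the equivalence functor ${}_{U_{r,s}^{0}}\mathrm{Mod}\to\mathcal{O}'(B_{r,s})$ to it. The explicit $B_{r,s}$-action on $B_{r,s}^{-}\otimes\mathbb{C}_{\lambda}$ recorded in the proof of Theorem 4.2 makes the cyclic vector $1\otimes 1$ a maximal vector of weight $\lambda$: all the $e_{i}'$ annihilate it since $\varphi(e_{i}',1)=\varepsilon(e_{i}')=0$, and the torus acts with the prescribed scalars. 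By the universal property of $H(\lambda)=B_{r,s}/I_{\lambda}$, the assignment $\bar{b}\mapsto b\cdot(1\otimes 1)$ is a well-defined $B_{r,s}$-linear surjection $H(\lambda)\twoheadrightarrow B_{r,s}^{-}\otimes\mathbb{C}_{\lambda}$; since both sides are free rank-one $B_{r,s}^{-}$-modules (for $H(\lambda)$, this is noted just before the corollary), it is an isomorphism.

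For the map $F\otimes v\mapsto F\cdot v$ from $B_{r,s}^{-}\otimes\mathbb{C}v$ into $M$, I would invoke the same equivalence of Theorem 4.2. The line $\mathbb{C}v\subset K(M)$ is a $U_{r,s}^{0}$-submodule isomorphic to $\mathbb{C}_{\lambda}$ by the definition of ``maximal of weight $\lambda$'', so its image under the equivalence is a $B_{r,s}$-submodule of $M$ canonically identified with $B_{r,s}^{-}\otimes\mathbb{C}v$ through the action formulas in the proof of Theorem 4.2; composing with the isomorphism of the previous paragraph then gives the claimed $B_{r,s}$-linear isomorphism $B_{r,s}^{-}\otimes\mathbb{C}v\cong H(\lambda)$. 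Injectivity is automatic from the equivalence being a functor, and one may also see it directly: if $F\in(B_{r,s}^{-})_{-\beta}$ then $F\cdot v\in M_{\lambda-\beta}$, so linear independence of a homogeneous basis of $B_{r,s}^{-}$ forces $F\cdot v=0$ to imply $F=0$.

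For the simplicity claim I would argue in both directions. Because $\mathbb{C}_{\lambda}$ is a simple $U_{r,s}^{0}$-module, the equivalence sends it to a simple object, namely $H(\lambda)$; and $H(\lambda)$ lies in $\mathcal{O}(B_{r,s})$ (not merely $\mathcal{O}'(B_{r,s})$) since the $Q^{+}$-grading on $B_{r,s}^{-}$ produces the required finite-dimensional weight space decomposition under the torus. Conversely, if $N\in\mathcal{O}(B_{r,s})$ is simple, then by Corollary 4.3 it carries a nonzero maximal vector $v$ of some weight $\lambda$, and the previous paragraph embeds $H(\lambda)$ into $N$, whence $N\cong H(\lambda)$ by simplicity. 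The main obstacle I anticipate is the first paragraph: verifying that the abstract $B_{r,s}$-action on $B_{r,s}^{-}\otimes\mathbb{C}_{\lambda}$ built through the chain of equivalences (braided $B_{r,s}^{-}$-Hopf modules, then $W_{r,s}$-modules, then $B_{r,s}$-modules) really does match the generators-and-relations description of $H(\lambda)$; once that bookkeeping is in place, the remaining assertions are formal.
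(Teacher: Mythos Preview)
Your proposal is correct and is precisely the intended route: the paper offers no proof beyond the sentence ``The following structural results follow from Theorem 4.2,'' and your argument is the natural unpacking of that sentence---identify $H(\lambda)$ with $B_{r,s}^{-}\otimes\mathbb{C}_{\lambda}$ via the explicit action formulas in the proof of Theorem 4.2, and read off simplicity from the fact that $\mathbb{C}_{\lambda}$ is a simple $U_{r,s}^{0}$-module. One small remark: your final ``converse'' paragraph (every simple object of $\mathcal{O}(B_{r,s})$ is some $H(\lambda)$) actually belongs to the \emph{next} corollary in the paper, not to this one; here ``$H(\lambda)$ are all simple'' just means each $H(\lambda)$ is simple, so you may drop that paragraph without loss.
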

\begin{corollary}
$(1)$ Let $M$ be a simple $B_{r,s}$-module. Then there exists some $\lambda$ such that $M\cong H(\lambda).$

$(2)$ Suppose that $M\in \mathcal{O}(B_{r,s}).$ Then $M$ is semi-simple.
\end{corollary}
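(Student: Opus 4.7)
The plan is to derive both parts from the structural equivalence of categories $\mathcal{O}(B_{r,s}) \sim {}_{\mathcal{P}}\mathrm{Gr}$ established in Theorem 4.2, combined with the preceding corollaries on the existence of maximal vectors (Corollary 4.3) and the identification of cyclic submodules generated by maximal vectors (Corollary 4.4). In effect, both statements are already captured by the equivalence; one only has to unpack the correspondence on simple objects.

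For part (1), I would proceed as follows. Let $M$ be a simple object of $\mathcal{O}(B_{r,s})$. Since $M$ is nontrivial, Corollary 4.3 produces a nonzero maximal vector $v \in K(M)$. Because $M$ has a $\mathcal{P}$-weight decomposition, we may assume $v$ is homogeneous of some weight $\lambda \in \mathcal{P}$, so $\omega_{\mu} v = r^{\langle\lambda,\mu\rangle}s^{-\langle\mu,\lambda\rangle} v$ and $\omega_{\mu}' v = r^{-\langle\mu,\lambda\rangle}s^{\langle\lambda,\mu\rangle} v$, while $e_i' v = 0$ for all $i \in I$. By Corollary 4.4, the map $B_{r,s}^{-} \otimes \mathbb{C} v \to H(\lambda)$, $F \otimes v \mapsto F \cdot v$, is an isomorphism of $B_{r,s}$-modules onto $B_{r,s}^{-} \cdot v$. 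The submodule $B_{r,s} \cdot v$ is nonzero, and the defining relations of $I_\lambda$ imply that the actions of $e_i'$ and of $B_{r,s}^{0}$ on $B_{r,s}^{-} v$ keep us inside $B_{r,s}^{-} v$, so $B_{r,s} \cdot v = B_{r,s}^{-} \cdot v \cong H(\lambda)$. Simplicity of $M$ then forces $M \cong H(\lambda)$.

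For part (2), I would invoke the second half of Theorem 4.2: the equivalence $\mathcal{O}(B_{r,s}) \sim {}_{\mathcal{P}}\mathrm{Gr}$ is given by $M \mapsto K(M) = \bigoplus_{\lambda \in \mathcal{P}} K(M)_\lambda$ in one direction and $V \mapsto B_{r,s}^{-} \otimes V$ in the other. Any $\mathcal{P}$-graded vector space admits a basis of homogeneous elements and hence decomposes as a direct sum of one-dimensional graded subspaces $\mathbb{C} v_\lambda$ concentrated in a single weight $\lambda$. Under the inverse equivalence, such a one-dimensional piece corresponds precisely to $H(\lambda)$, since the $B_{r,s}$-action defined in the proof of Theorem 4.2 on $B_{r,s}^{-} \otimes \mathbb{C} v_\lambda$ matches the action on $H(\lambda) = B_{r,s}/I_\lambda$. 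Therefore, for any $M \in \mathcal{O}(B_{r,s})$, one obtains a direct sum decomposition $M \cong \bigoplus_{\alpha} H(\lambda_\alpha)$; combined with part (1), which says each $H(\lambda_\alpha)$ is simple, this proves that $M$ is semi-simple.

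I do not anticipate a serious obstacle: both parts are formal consequences of the machinery already in place. The only point that deserves a careful sentence is the identification $B_{r,s} \cdot v = B_{r,s}^{-} \cdot v$ for a maximal weight vector $v$, which relies on the observation that the $B_{r,s}^{+}$- and $B_{r,s}^{0}$-actions on $B_{r,s}^{-} v$ are fully determined by (and stay inside) the $B_{r,s}^{-}$-action via the commutation relations in $B_{r,s}$, exactly as built into the definition of $I_\lambda$.
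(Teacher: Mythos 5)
Your proposal is correct and follows exactly the route the paper intends: the paper gives no explicit proof, merely asserting that the corollary ``follows from Theorem 4.2,'' and your argument is precisely the natural unpacking via Corollary 4.3 (existence of a homogeneous maximal vector), Corollary 4.4 (identifying $B_{r,s}\cdot v = B_{r,s}^{-}\cdot v \cong H(\lambda)$), and the equivalence $\mathcal{O}(B_{r,s})\sim{}_{\mathcal{P}}\mathrm{Gr}$ applied to a homogeneous basis of $K(M)$. The only reading convention worth stating explicitly is that ``simple $B_{r,s}$-module'' in part (1) means a simple object of $\mathcal{O}(B_{r,s})$, which is how you (correctly) interpret it.
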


\subsection{Extremal projectors}

We will give the projection $P$ in the $\mathfrak{s}\mathfrak{l}_{2}$-case, and show that it is almost the operator given in [C] up to a change $r\mapsto s,$ $s\mapsto r.$

At first, we calculate $\Delta_{0} :B_{r, s}^{-}\rightarrow B_{r, s}^{-}\underline{\otimes}B_{r, s}^{-}$ and the antipode $S$; note that the multiplication is twisted in the right-hand side.
\begin{lemma}
$(1)$ $\Delta_{0}(f^{n})=\sum_{p=0}^{n} \binom{n}{p}_{rs^{-1}} (rs^{-1})^{p(p-n)}f^{p}\otimes f^{n-p};$

$(2)$ $S(f^{n})=(-1)^{n} (rs^{-1})^{\frac{-n(n-1)}{2}}f^{n}.$
\end{lemma}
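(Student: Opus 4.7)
The whole statement concerns the braided Hopf algebra structure of $B_{r,s}^{-}$ in ${}_{U_{r,s}^{0}}^{U_{r,s}^{0}}\mathcal{YD}$, as highlighted in Remark 4.1. My plan is to first pin down the braiding $\sigma$ on $B_{r,s}^{-}\otimes B_{r,s}^{-}$, then deduce a $q$-commutation relation inside $B_{r,s}^{-}\underline{\otimes}B_{r,s}^{-}$, and finally read off both (1) and (2) by a $q$-binomial argument and a braided anti-algebra argument.

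For part (1), the starting point is to identify the braiding explicitly in the $\mathfrak{sl}_{2}$-case. The comodule structure on $B_{r,s}^{-}$ viewed as an object of ${}_{U_{r,s}^{0}}^{U_{r,s}^{0}}\mathcal{YD}$ is $\delta(f)=\omega\otimes f$, and the Schr\"odinger action of $\omega\in U_{r,s}^{0}$ on $f\in B_{r,s}^{-}$ is the adjoint one, giving $\omega\cdot f=\omega f\omega^{-1}=r^{-1}s\,f$ (exactly the scalar appearing in Example~4.1). Hence $\sigma(f\otimes f)=r^{-1}s\,f\otimes f$. Setting $X=f\otimes 1$ and $Y=1\otimes f$ in $B_{r,s}^{-}\underline{\otimes}B_{r,s}^{-}$, this yields $XY=f\otimes f$ and $YX=(r^{-1}s)\,f\otimes f$, so $YX=(r^{-1}s)XY$. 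Since $\Delta_{0}$ is an algebra morphism into the braided tensor product, $\Delta_{0}(f^{n})=\Delta_{0}(f)^{n}=(X+Y)^{n}$, and the standard $q$-binomial theorem for $q$-commuting variables gives $(X+Y)^{n}=\sum_{p}\binom{n}{p}_{r^{-1}s}X^{p}Y^{n-p}=\sum_{p}\binom{n}{p}_{r^{-1}s}f^{p}\otimes f^{n-p}$. Finally I would invoke the identity $\binom{n}{p}_{v^{-1}}=v^{p(p-n)}\binom{n}{p}_{v}$ with $v=rs^{-1}$, obtained from $(n)_{v^{-1}}=v^{-(n-1)}(n)_{v}$, to rewrite this as the stated formula.

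For part (2), the base case $n=1$ follows directly from the defining convolution identity $m\circ(S\otimes\mathrm{id})\circ\Delta_{0}=\eta\varepsilon$ applied to $f$, giving $S(f)=-f$. For the induction, I would use the braided anti-multiplicativity of $S$ in a braided Hopf algebra, namely $S\circ m=m\circ\sigma\circ(S\otimes S)$, to compute
\[S(f^{n})=S(f\cdot f^{n-1})=m\circ\sigma\bigl(S(f)\otimes S(f^{n-1})\bigr)=(r^{-1}s)^{n-1}S(f^{n-1})\,S(f),\]
and then substitute the inductive hypothesis $S(f^{n-1})=(-1)^{n-1}(rs^{-1})^{-(n-1)(n-2)/2}f^{n-1}$; the exponents collapse to $-n(n-1)/2$ as required. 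Alternatively, if one prefers to avoid quoting braided anti-multiplicativity, the same result follows by peeling off the $p=n$ term from the identity $\sum_{p}(rs^{-1})^{p(p-n)}\binom{n}{p}_{rs^{-1}}S(f^{p})f^{n-p}=0$ (part (1) applied to the defining relation of $S$) and an easy induction.

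The computations are mechanical once the braiding scalar is fixed, so the one genuine point of care is bookkeeping: there are two very similar quantities, $r^{-1}s$ (which governs the braiding and hence the natural $q$-binomial) and $rs^{-1}$ (in which the final answer is phrased), and one must invoke the $v\leftrightarrow v^{-1}$ symmetry of the Gaussian binomial coefficients to pass between them. This is the only step where a calculational slip could occur.
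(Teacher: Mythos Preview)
Your proposal is correct. For part (1), both you and the paper are essentially invoking the $q$-binomial theorem: the paper does this by a bare induction on $n$ using the Pascal identity $\binom{m+1}{n}_{rs^{-1}}=\binom{m}{n}_{rs^{-1}}+(rs^{-1})^{m+1-n}\binom{m}{n-1}_{rs^{-1}}$, whereas you first identify the $q$-commutation $YX=(r^{-1}s)XY$ in $B_{r,s}^{-}\underline{\otimes}B_{r,s}^{-}$, quote the $q$-binomial expansion of $(X+Y)^{n}$, and then use the symmetry $\binom{n}{p}_{v^{-1}}=v^{p(p-n)}\binom{n}{p}_{v}$ to pass from $r^{-1}s$ to $rs^{-1}$. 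Your version makes the origin of the power $(rs^{-1})^{p(p-n)}$ completely transparent. For part (2), the paper applies $m\circ(S\otimes\mathrm{id})$ to the formula from (1), obtaining a vanishing alternating sum, and concludes by induction together with the identity $\sum_{k=0}^{m}(-1)^{k}\binom{m}{k}_{rs^{-1}}(rs^{-1})^{k(k-1)/2}=0$; this is precisely your ``alternative'' argument. Your primary route via the braided anti-multiplicativity $S\circ m=m\circ\sigma\circ(S\otimes S)$ is a genuinely different and slicker argument, trading a combinatorial identity for a structural fact about braided Hopf algebras.
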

\begin{proof}
(1) By induction on $n$ and use the following identity:\[\binom{m+1}{n}_{rs^{-1}}=\binom{m}{n}_{rs^{-1}}+(rs^{-1})^{m+1-n}\binom{m}{n-1}_{rs^{-1}}.\]

(2) Applying $S\otimes \mathrm{id}$ to the formula of $\Delta_{0}(f^{n}),$ we can get the desired result by using induction and the following identity:\[\sum\limits_{k=0}^{m}(-1)^{k}\binom{m}{k}_{rs^{-1}}(rs^{-1})^{\frac{k(k-1)}{2}}=0.\]\end{proof}

From Proposition 4.3, in the $\mathfrak{s}\mathfrak{l}_{2}$-case, we have \[\rho(m)=\sum\limits_{n=0}^{\infty} (rs^{-1})^{\frac{n(n-1)}{2}}\frac{f^{n}}{(n)_{rs^{-1}}^{!}}\otimes e^{n}\cdot m,\]
and then ~~~~~~~~~~$P(m)=\sum\limits_{n=0}^{\infty} (-1)^{n}(rs^{-1})^{-\frac{n(n-1)}{2}}\frac{f^{n}}{(n)_{r^{-1}s}^{!}}e^{n}\cdot m.$\vskip2mm
It is almost the extremal operator $\Gamma$ given in [C, Example 6.1] up to a change $r\mapsto s,$ $s\mapsto r.$

\vskip3mm Acknowledgements. The author is deeply indebted to Dr. Xin Fang
for his patient reply to the author's questions.



Mathematical Sciences Center, Tsinghua University, Jin Chun Yuan West Building.

Beijing, 100084, P.R. China.

E-mail address: cwdeng@amss.ac.cn

\end{document}